\newtheorem{theorem}{Theorem}[section]
\newtheorem{corollary}[theorem]{Corollary}
\newtheorem{definition}[theorem]{Definition}
\newtheorem{lemma}[theorem]{Lemma}
\newtheorem{proposition}[theorem]{Proposition}
\newtheorem{remark}[theorem]{Remark}
\numberwithin{equation}{section}
\def\dsum{\displaystyle\sum}
\def\Im{\mathrm{Im}}
\def\N{{\mathbb N}}
\def\D{{\mathbb D}}
\def\Z{{\mathbb Z}}
\def\R{{\mathbb R}}
\def\C{{\mathbb C}}
\journal{\empty}
\date{}
\begin{document}

\begin{frontmatter}

\title{The existence of invariant curves of a kind of almost periodic twist mappings}

\author[au1]{Yingdu Dong}

\address[au1]{Laboratory of Mathematics and Complex Systems (Ministry of Education), School of Mathematical Sciences, Beijing Normal University, Beijing 100875, People's Republic of China}

\author[au1]{Xiong Li\footnote{ Partially supported by the NSFC (11971059). Corresponding author.}}

\ead[au1]{xli@bnu.edu.cn}

\begin{abstract}
In this paper, we are concerned with the existence of invariant curves of the planar twist mappings where the perturbations are almost periodic. As an application, the existence of almost periodic solutions and the boundedness of all solutions for pendulum-type equations with an almost periodic external force are proved.
\end{abstract}

\begin{keyword}
Almost periodic twist mappings\sep Invariant curves \sep Pendulum-type equations\sep Almost periodic solutions \sep Boundedness.
\end{keyword}

\end{frontmatter}

\section{Introduction}

In this paper we are concerned with the existence of invariant curves of the following planar almost periodic mapping
\begin{equation}
\mathfrak{M}:\left\{
\begin{array}{lc}
x_1 = x+y+f(x,y), &  \\
 & (x,y)\in\R\times[a,b],\\
y_1 = y+g(x,y), &
\end{array}
\right.\label{twist}
\end{equation}
where the perturbations $f(x,y)$ and $g(x,y)$ are almost periodic in $x$ with an infinite dimensional frequency $\omega=(\omega_1,...,\omega_{\nu},...)_{\nu \in \N^+}$ which satisfies
a kind of Diophantine condition, and the mapping $\mathfrak{M}$ is assumed to possess the intersection property.

In this paper we assume that the perturbations $f$, $g$ admit rapidly converging Fourier series expansions. To be more specific, we have to use the notations which are given later in the next section. For instance, assume that the perturbation $f$ takes the form
\begin{equation*}
f(x,y)= \sum_{l\in Z^{\N^+}} f_l(y) e^{i(\omega,l)x},
\end{equation*}
where the frequency $\omega$ satisfies a kind of Diophantine condition described in (\ref{Dio}) and the coefficient $f_l$ decays rapidly when the norm of $l$ goes to infinity. In fact we will use the weighted norm  $\|f\|_{r,s}$ given in Definition $\ref{func sp2}$ to describe the rate of decay. Meanwhile, the intersection property of $\mathfrak{M}$ means that the image of an almost periodic curve under the mapping $\mathfrak{M}$ will intersect with itself. Under these assumptions, if the perturbations are small enough in the sense of the $\|\cdot\|_{r,s}$ norm, the existence of invariant curves of the mapping will be proved. Based on this result, the existence of almost periodic solutions and the boundedness of all solutions for the pendulum-type equation (\ref{pendul}) with an almost periodic external force is also proved.

When the mapping in (\ref{twist}) is periodic with respect to the variable $x$, Moser had proved the existence of invariant closed curves under the smoothness assumption of $C^{333}$ in \cite{Moser}. And an analytic version theorem was presented in the book \cite{moser and siegel}. A version in class $C^5$ is presented in R\"{u}ssmann \cite{russmann} and the optimal version in class $C^p$, where $p>3$ is presented in Herman \cite{Herman1}, \cite{Herman2}. Moreover, Ortega established a variant of small twist theorem in \cite{ortega1} and also studied the existence of invariant curves of mappings with average small twist in \cite{ortega2}.

When the perturbations are quasi-periodic, there are also many results about the existence of invariant curves under the exact symplectic condition, see Zharnitsky \cite{Zhar}, or the reversible condition, see Liu \cite{liubin}. One can find a detailed summary in Huang \cite{huangpeng2}, \cite{huangpeng3}, \cite{huangpeng}.

Recently, Huang, Li and Liu had proved the existence of invariant curves when the perturbations are almost periodic and the mapping has intersection property in \cite{huangpeng2}. As an application they studied the existence of almost periodic solutions and the boundedness of all solutions for the superlinear Duffing's equation. The theorem proved by Huang is based on the discussions in \cite{spatial}, where P\"{o}schel had studied the existence of full dimension invariant tori for the infinite dimensional near-integrable Hamiltonian system. In that paper P\"{o}schel had developed the conception of ``spatial structures'', which is used to construct related weight functions to describe the decay rate of the coefficients of the perturbations. As described in \cite{spatial}, the perturbations are considered as composites of smaller pieces reflecting the underlying spatial structure rather than a single chunk. Huang used the conception of  ``spatial structures'' to describe the decay rate of the Fourier coefficients of the perturbations and gave the related nonresonance condition in his work. It is known that in KAM-type theorems, a good balance should be made between the nonresonance conditions for the small divisor and the decay rate of the coefficients of the perturbations. That is to say, if the related weight functions are too ``light'', there will be no frequency vectors satisfying the related nonresonance condition, while if the weights are too ``heavy'', the coefficients have to decay too fast thus there will be less perturbations to deal with. In addition to P\"{o}schel's work, Bourgain \cite{bour} had proved the existence of full dimension invariant tori for an infinite dimensional Hamiltonian system which is related to the one dimension Schr\"{o}dinger equation under an explicit decay rate and nonresonance condition, there is also a generalization in \cite{cong} by Cong, Liu, Shi and Yuan.

Inspired by the works of Huang and Bourgain, in this paper we will investigate the existence of invariant curves for almost periodic twist mappings when the decay rate condition and the nonresonance condition are more concrete and quantitative. As described in the next section, the weighted norm of the perturbations are simple and explicit. Roughly speaking, the coefficients of the perturbations should decay at least exponentially. For example, the perturbation may take the following form
\begin{equation*}
  \sum_{n=1}^{\infty} \frac{\epsilon}{2^n} \cos(\omega_n x),\end{equation*}
where $\omega_n$ is the component of the frequency vector and $\epsilon>0$ is small enough.

The detailed statement of the existence theorem will be put in the next section. The rest of the paper is organized as follows. In Section 2, firstly we list some basic properties of almost periodic functions which will be used later, secondly by the measure estimate the existence of the Diophantine frequency vector will be proved. Then we can give the definition of real analytic almost periodic functions and the related weighted norms. Later we introduce some properties of them which are followed by some details of the intersection property. At last the main invariant curve theorem (Theorem \ref{main}) for the almost periodic mapping $\mathfrak{M}$ given by (\ref{twist}) is stated. The proof of Theorem \ref{main} is given in Sections 3 and 4. In Section 5, as an application, we will prove the existence of almost periodic solutions and the boundedness of all solutions for the pendulum-type equation (\ref{pendul}) with an almost periodic external force.

\section{Notations and preliminaries}
In this section, some notations, definitions and properties which will be used in the proof of the main theorem are listed. The proofs of some well-known consequences are omitted.
\subsection{The module of almost periodic functions}
Firstly, the definition of almost periodic functions is stated as follows.
\begin{definition}
The function $f:\R \rightarrow \C$ is called almost periodic if for any sequence $\{t_n\} \subseteq \R$, there exists a subsequence $\{t_{n_k} \}$ such that $\{f(t+t_{n_k})\}$ converges uniformly in $\R$.
\end{definition}
There is also an equivalent definition.
\begin{definition}
$f:\R \rightarrow \C$ is almost periodic if and only if for all $\epsilon>0$, there exists a $l>0$ such that for all $x \in \R$, there exists a $ \tau \in (x,x+l)$ such that  $|f(t+\tau)-f(t)|<\epsilon$ is valid for all $t\in\R$.
\end{definition}
It is well-known that if $f$ is almost periodic then the limit
\begin{align*}
\lim_{T \rightarrow \infty} \frac{1}{T} \int_a ^{a+T} f(t)e^{-i\lambda t} dt
\end{align*}
exists for any $\lambda$ and $a$, in fact the limit converges uniformly in $a$. The set consisted of all $\lambda$ such that the limit above is not equal to zero is called the exponent set of $f$, which is countable. Thus the module of $f$ can be defined as the smallest additive group generated by all exponents of $f$.

Similarly the module $\mathcal{M}(f,g)$ is defined as the smallest additive group generated by the union of all exponents of $f$ and $g$, which can be represented as $\{\sum_{\nu} j_{\nu}\lambda_{\nu}:j_{\nu}\in\Z\}$, where $\lambda_{\nu}$ is an exponent of $f$ or $g$ and the sum is finite. Take a minimal generator of $\mathcal{M}(f,g)$ denoted by $\Lambda$ such that any element in $\mathcal{M}(f,g)$ is a finite linear combination of elements in $\Lambda$ with integral coefficients.

We remark that $\Lambda$ is countable, which can be deduced from that $\mathcal{M}(f,g)$ is countable. Meanwhile, $\Lambda$ may not be the maximal linear independent subset of $\mathcal{M}(f,g)$ with respect to integral coefficients, because any element in $\mathcal{M}(f,g)$ is a finite linear combination of the elements in this subset with rational coefficients, rather than integral coefficients.

\subsection{The hull of almost periodic functions}
For all real sequences $\{t_k\}$ such that $\{f(t+t_k)\}$ converges uniformly in $t$, the set of corresponding limit functions is called the hull of $f$. Equivalently, the hull ${\rm H}(f)$ is the closure of $\{f_t(\cdot)=f(\cdot+t):t\in\R\}$ in the sense of sup-norm. It is obvious that $f$ itself is in its hull, and the hull is a compact space due to the definition of almost periodic functions.

Denote the element in ${\rm H}(f)$ by $\xi$ and ${\rm H}(f)$ can be also regarded as the hull of any $\xi$ in ${\rm H}(f)$. For simplicity, denote $f$ by $\xi_0$ and denote $\xi(x+t)$ by $(\xi \cdot t)(x)$. It is well-known that there exists a compact Abelian group structure on ${\rm H}(f)$ with the identity $\xi_0$ which is generated by the flow  $\{\xi_0\cdot t\}$. More specifically, assume that
\begin{align*}
\xi_1=\lim_{n\rightarrow\infty} \ \xi_0 \cdot t_n,~ \xi_2=\lim_{n\rightarrow\infty} \xi_0 \cdot s_n
\end{align*}
are two elements of ${\rm H}(f)$, then the product is defined as
\begin{align*}
\xi_1 \cdot \xi_2=\lim_{n\rightarrow\infty} \ \xi \cdot (t_n+s_n).
\end{align*}
The limit above is well defined and the product is commutative. Moreover, define $\xi^{-1}$ by
\begin{align*}
\xi^{-1}=\lim_{n\rightarrow\infty}\ \xi \cdot (-t_n).
\end{align*}
As a consequence, the real axis $\R$ can be viewed as an embedded subgroup of ${\rm H}(f)$ if identify any real number $t$ with the element $\xi_0 \cdot t$.

\subsection{Module containment}
 In this subsection some well-known properties of almost periodic functions are given, and a slightly modified proposition about  module containment is proved.
 \begin{definition}
 	The $\epsilon-$translation set $T_{\epsilon}(f)$ of $f$ is defined as $\{\tau:|f(t+\tau)-f(t)|<\epsilon,$ for all $t \in \R\}$.
 \end{definition}

 The relation between $\epsilon-$translation set and the exponents is stated as in the following lemmas.
 \begin{lemma}
 	For any finite subset of the exponents of $f$, $\{\lambda_n:n=1,...,N\}$$\subseteq\mathcal{M}(f)$ and for all $ \epsilon>0$, there exists a $\delta > 0$, such that  $ T_{\delta}(f) \subseteq \{\tau:|\lambda_n\tau|<\epsilon \ \mod(2\pi)$, $n = 1,...,N\}$.\label{translation1}
\end{lemma}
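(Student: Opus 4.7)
The plan is to exploit the standard Bohr formula for Fourier coefficients of almost periodic functions and to translate smallness of $|e^{i\lambda_n\tau}-1|$ into smallness of $\lambda_n\tau \bmod 2\pi$. Since each $\lambda_n$ belongs to the exponent set of $f$, the coefficient
\[
a(\lambda_n) \;=\; \lim_{T\to\infty}\frac{1}{T}\int_0^T f(t)\,e^{-i\lambda_n t}\,dt
\]
is well defined and nonzero, and $m := \min_{1\le n\le N}|a(\lambda_n)|>0$ because we have only finitely many exponents.

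My first step is to apply the Bohr formula to the translate $f_\tau(t)=f(t+\tau)$. A simple change of variables shows
\[
\lim_{T\to\infty}\frac{1}{T}\int_0^T f(t+\tau)\,e^{-i\lambda_n t}\,dt \;=\; e^{i\lambda_n\tau}\,a(\lambda_n),
\]
so that the Bohr coefficient of $f_\tau-f$ at $\lambda_n$ is $(e^{i\lambda_n\tau}-1)\,a(\lambda_n)$. If $\tau\in T_\delta(f)$, then $|f(t+\tau)-f(t)|<\delta$ uniformly in $t$, and passing this bound through the mean gives
\[
\bigl|e^{i\lambda_n\tau}-1\bigr|\,|a(\lambda_n)| \;\le\; \delta, \qquad n=1,\dots,N.
\]
Consequently $|e^{i\lambda_n\tau}-1|\le \delta/m$ for every $n$.

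The second step is the elementary passage from $|e^{i\theta}-1|=2|\sin(\theta/2)|$ to control of $\theta\bmod 2\pi$. Given $\epsilon\in(0,\pi)$, one has $|e^{i\theta}-1|\ge 2|\sin(\epsilon/2)|=:c(\epsilon)>0$ whenever $\operatorname{dist}(\theta,2\pi\Z)\ge \epsilon$. Choosing $\delta$ so that $\delta/m < c(\epsilon)$, i.e. $\delta < m\cdot c(\epsilon)$, forces $\operatorname{dist}(\lambda_n\tau,2\pi\Z)<\epsilon$ for every $n=1,\dots,N$, which is exactly $|\lambda_n\tau|<\epsilon\bmod 2\pi$. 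This $\delta$ depends only on $\epsilon$ and on $\{\lambda_1,\dots,\lambda_N\}$ (through $m$), as required.

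The only delicate point is the first step, passing the $L^\infty$ bound $|f_\tau-f|<\delta$ through the Bohr mean in a fully rigorous way: one has to verify that the limit defining the mean of $(f_\tau-f)e^{-i\lambda_n t}$ exists (which follows from the existence of the means for $f$ and $f_\tau$ separately, both being almost periodic with the same exponent set) and then use $|\tfrac{1}{T}\int_0^T (f_\tau-f)e^{-i\lambda_n t}dt|\le \delta$ uniformly in $T$ before taking $T\to\infty$. Everything else is a routine estimate on the unit circle.
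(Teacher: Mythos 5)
The paper omits the proof of this lemma as a well-known fact (``The proofs of some well-known consequences are omitted''), so there is no internal argument to compare against. Your proof via Bohr--Fourier coefficients is correct and is essentially the standard one: since each $\lambda_n$ is an exponent of $f$, $m:=\min_n|a(\lambda_n)|>0$; the uniform bound $|f(t+\tau)-f(t)|<\delta$ for $\tau\in T_\delta(f)$ passes through the Bohr mean to give $|e^{i\lambda_n\tau}-1|\,|a(\lambda_n)|\le\delta$, hence $|e^{i\lambda_n\tau}-1|\le\delta/m$; and the elementary lower bound $|e^{i\theta}-1|\ge 2\sin(\epsilon/2)$ whenever $\operatorname{dist}(\theta,2\pi\Z)\ge\epsilon$ closes the argument by taking $\delta<2m\sin(\epsilon/2)$. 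Two small points you should state explicitly: for $\epsilon\ge\pi$ the conclusion is vacuous since $\operatorname{dist}(\theta,2\pi\Z)\le\pi$ always, so one may assume $\epsilon<\pi$; and the change-of-variables step giving the Bohr coefficient of $f(\cdot+\tau)$ as $e^{i\lambda\tau}a(\lambda)$ only needs boundedness of $f$ together with existence of the mean, since for fixed $\tau$ the difference between $\tfrac1T\int_\tau^{T+\tau}$ and $\tfrac1T\int_0^T$ vanishes as $T\to\infty$ --- you flag this correctly.
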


\begin{lemma}
	For all $ \epsilon>0$, there exists a subset of the exponents of $f$, $\{\lambda_n:n=1,...,N\}\subseteq\mathcal{M}(f)$ and a positive $\delta$, such that $\{\tau:|\lambda_n\tau|<\delta \mod(2\pi),$ $n=1,...,N\}\subseteq T_{\epsilon}(f)$.\label{translation2}
\end{lemma}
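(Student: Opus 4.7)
The plan is to build the required set of exponents from a trigonometric polynomial that uniformly approximates $f$. The key external input is the Bohr approximation theorem: every almost periodic function can be uniformly approximated on $\R$ by finite trigonometric polynomials whose exponents belong to the module $\mathcal{M}(f)$. This is a standard fact in the theory of almost periodic functions, and I would cite it rather than reprove it.

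Given $\epsilon>0$, I would first apply Bohr's theorem to produce a trigonometric polynomial
\[
P(t)=\sum_{n=1}^N c_n\,e^{i\lambda_n t},\qquad \lambda_n\in\mathcal{M}(f),
\]
such that $\sup_{t\in\R}|f(t)-P(t)|<\epsilon/3$. These $\lambda_n$ will be the finite subset of exponents appearing in the statement, and $N$ is fixed from here on. Set $C=\sum_{n=1}^N|c_n|$, which I may assume positive (otherwise $f$ is uniformly close to $0$ and any $\tau$ works).

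Next I would choose $\delta>0$ small enough that whenever $|\lambda_n\tau|<\delta\pmod{2\pi}$ for every $n=1,\ldots,N$, one has $|e^{i\lambda_n\tau}-1|<\epsilon/(3C)$. This is possible because $|e^{is}-1|$ is continuous in $s$ and vanishes at integer multiples of $2\pi$; explicitly, it suffices to take $\delta\le \epsilon/(3C)$ (using $|e^{is}-1|\le|s|$ for $|s|\le \pi$, say). For such $\tau$ I get
\[
|P(t+\tau)-P(t)|\le \sum_{n=1}^N |c_n|\,|e^{i\lambda_n\tau}-1|<\frac{\epsilon}{3}
\]
uniformly in $t$.

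Finally, the triangle inequality gives
\[
|f(t+\tau)-f(t)|\le |f(t+\tau)-P(t+\tau)|+|P(t+\tau)-P(t)|+|P(t)-f(t)|<\epsilon
\]
for all $t\in\R$, so $\tau\in T_\epsilon(f)$, which is exactly the claimed inclusion. The only real obstacle is the invocation of Bohr's approximation theorem; once that is granted, the rest is a straightforward continuity-and-triangle-inequality argument, and I expect no additional technical difficulty.
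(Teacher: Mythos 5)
The paper states this lemma as a well-known fact in Section 2.3 and explicitly omits the proof (``The proofs of some well-known consequences are omitted''), so there is no internal argument to compare against. Your proof via the Bohr (Bochner--Fej\'er) approximation theorem, combined with a three-$\epsilon$ triangle inequality and the estimate $|e^{is}-1|\le|s|$ to control $|P(t+\tau)-P(t)|$, is the standard argument for this lemma, and the logic is sound. One small point of precision: the statement asks the $\lambda_n$ to form a subset of the \emph{exponents} of $f$, not merely of the module $\mathcal{M}(f)$; the Bochner--Fej\'er form of the approximation theorem does produce trigonometric polynomials whose frequencies are drawn from the exponent set of $f$ itself, so the conclusion holds exactly as stated, but you should cite that sharper version rather than the weaker phrasing ``exponents belong to the module $\mathcal{M}(f)$.''
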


\begin{theorem}[Kronecker theorem]
	For real numbers $\{\lambda_i\}$, $\{\theta_i\}$, where $i=1,...,n$, the two statements below are equivalent.
	
(i)	For all $\epsilon>0$, there exists $\tau \in \R$ such that
\begin{equation*}
|\lambda_i\tau-\theta_i|<\epsilon\ \mod(2\pi),\ \ i=1,...,n.
\end{equation*}

(ii) Assume that $\{k_i: i = 1,...,n\} \subseteq \Z$ satisfies $\dsum_{i=1}^{n} k_i\lambda_i = 0$, then \begin{equation*}
\sum_{i=1}^{n} k_i\theta_i = 0 \ \mod(2\pi).
\end{equation*}
\label{kronecker}
\end{theorem}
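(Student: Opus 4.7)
The direction (i) $\Rightarrow$ (ii) I expect to handle by a direct linear-combination argument. Given $(k_1,\ldots,k_n)\in\Z^n$ with $\sum_i k_i\lambda_i=0$ and an arbitrary $\epsilon>0$, I would apply (i) with tolerance $\epsilon/(1+\sum_i|k_i|)$ to obtain some $\tau\in\R$, multiply the $i$-th approximate relation by $k_i$, and sum; the $\lambda$-contributions cancel, leaving $|\sum_i k_i\theta_i|<\epsilon\pmod{2\pi}$. Since $\epsilon$ was arbitrary, this forces $\sum_i k_i\theta_i\equiv 0\pmod{2\pi}$.

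For the nontrivial direction (ii) $\Rightarrow$ (i), the plan is to move to the torus $\mathbb{T}^n:=(\R/2\pi\Z)^n$ and argue by duality. Let $L:\R\to\mathbb{T}^n$ be the one-parameter subgroup $L(\tau)=(\lambda_1\tau,\ldots,\lambda_n\tau)\bmod 2\pi$, and set $H:=\overline{L(\R)}$, a closed (hence compact) subgroup of $\mathbb{T}^n$. Observe that statement (i) is exactly the assertion that the point $\theta:=(\theta_1,\ldots,\theta_n)\bmod 2\pi$ lies in $H$. I would argue by contradiction: if $\theta\notin H$, then the standard separation-by-characters property of the compact quotient $\mathbb{T}^n/H$ (a consequence of Pontryagin duality) produces a continuous character $\chi$ of $\mathbb{T}^n$ which is identically $1$ on $H$ but satisfies $\chi(\theta)\neq 1$. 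Every such character has the form $\chi_k(x)=e^{i\langle k,x\rangle}$ with $k\in\Z^n$.

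The two conditions on $\chi_k$ then translate cleanly. First, $\chi_k\equiv 1$ on $L(\R)$ means $e^{i\tau\sum_j k_j\lambda_j}=1$ for every $\tau\in\R$, which forces $\sum_j k_j\lambda_j=0$. Second, $\chi_k(\theta)\neq 1$ means $\sum_j k_j\theta_j\not\equiv 0\pmod{2\pi}$. Together these directly contradict hypothesis (ii), so $\theta\in H$ and (i) follows.

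The main obstacle, as I see it, is the separation step: that a proper closed subgroup of $\mathbb{T}^n$ can be cut out by integer-vector characters. My preferred route is to invoke Pontryagin duality for the compact abelian quotient $\mathbb{T}^n/H$, but if a more self-contained treatment is desired one can instead use the structure theorem classifying closed subgroups of $\mathbb{T}^n$ as finite unions of translates of subtori, or run a Fourier/equidistribution argument: average a nonnegative trigonometric polynomial concentrated near $\theta$ along the orbit $L(\R)$, and use the Weyl limit $\frac{1}{2T}\int_{-T}^{T}e^{i\tau\lambda}\,d\tau\to\mathbf{1}_{\lambda=0}$ together with (ii) to derive a contradiction with $\theta\notin H$. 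All remaining steps are formal.
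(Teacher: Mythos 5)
The paper does not actually prove this statement: Theorem~\ref{kronecker} is quoted in the module-containment subsection as a well-known classical fact (together with Lemmas~\ref{translation1} and~\ref{translation2}, which are likewise given without proof), so there is no in-paper argument to compare your proposal against. Your proof is correct. The easy direction (i)~$\Rightarrow$~(ii) is sound: writing $\lambda_i\tau-\theta_i=2\pi m_i+e_i$ with $|e_i|<\epsilon/(1+\sum|k_i|)$ and summing against $k_i$ indeed kills the $\lambda$-terms and shows $\sum k_i\theta_i\equiv-\sum k_i e_i\pmod{2\pi}$, which has arbitrarily small representative and so must vanish mod $2\pi$. For (ii)~$\Rightarrow$~(i), the reformulation as ``$\theta\in H:=\overline{L(\R)}$'' is exactly the content of (i), and the separation-by-characters step is a legitimate invocation of Pontryagin duality: since $\mathbb{T}^n/H$ is compact abelian and $\theta H\neq H$, some character of $\mathbb{T}^n/H$ is nontrivial at $\theta H$, and its pullback is a $\chi_k\in H^\perp$ with $\chi_k(\theta)\neq 1$; the two conditions $\chi_k|_{L(\R)}\equiv 1$ and $\chi_k(\theta)\neq 1$ translate to $\sum k_j\lambda_j=0$ and $\sum k_j\theta_j\not\equiv 0\pmod{2\pi}$, contradicting (ii). This is the standard duality proof of the Kronecker approximation theorem, and it closes the gap cleanly. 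Your two proposed fall-backs (the classification of closed subgroups of $\mathbb{T}^n$, or a Weyl-type averaging argument with $\frac{1}{2T}\int_{-T}^T e^{i\tau\lambda}\,d\tau\to\mathbf{1}_{\lambda=0}$) are also both viable, more self-contained replacements for the duality black box, and either would be acceptable in a context where one wishes not to cite Pontryagin duality.
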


It is well-known that $\mathcal{M}(g)\subseteq\mathcal{M}(f)$ is equivalent to that for any sequence $\{t_n\} \subseteq \R$ such that $\{f(t+t_n)\}$ converges uniformly, $\{g(t+t_n)\}$ will also converge uniformly. And here a slightly modified statement is proved.
\begin{proposition}
	$\mathcal{M}(h)\subseteq\mathcal{M}(f,g)$ is equivalent to that assume $\{f(t+t_n)\}$ and $\{g(t+t_n)\}$ converge uniformly, then $\{h(t+t_n)\}$ converges uniformly.
\label{modified module containment}\end{proposition}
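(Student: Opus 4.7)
The plan is to prove both directions via $\epsilon$-translation sets, closely paralleling the classical single-function module containment theorem but tracking $f$ and $g$ simultaneously. The forward direction goes through Lemmas \ref{translation1} and \ref{translation2}; the reverse direction is by contrapositive and uses Theorem \ref{kronecker} to construct a sequence along which $f$- and $g$-translates converge uniformly but $h$-translates do not.

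For the forward direction, assume $\mathcal{M}(h)\subseteq\mathcal{M}(f,g)$ and fix $\epsilon>0$. I aim to produce $\delta>0$ with $T_\delta(f)\cap T_\delta(g)\subseteq T_\epsilon(h)$. Lemma \ref{translation2} applied to $h$ first gives finitely many exponents $\mu_1,\dots,\mu_M$ of $h$ and $\delta_1>0$ such that $\{\tau:|\mu_k\tau|<\delta_1\mod(2\pi)\}\subseteq T_\epsilon(h)$. Because each $\mu_k\in\mathcal{M}(h)\subseteq\mathcal{M}(f,g)$, I may write $\mu_k=\sum_j n_{kj}\lambda_j$ as a finite integer combination of exponents of $f$ or $g$; set $N=\max_k\sum_j|n_{kj}|$. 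Applying Lemma \ref{translation1} to $f$ and $g$ separately on the finite list $\{\lambda_j\}$ with tolerance $\delta_1/N$ yields $\delta>0$ with $T_\delta(f)\cap T_\delta(g)\subseteq\{|\lambda_j\tau|<\delta_1/N\mod(2\pi)\}$, and the triangle inequality modulo $2\pi$ then forces $|\mu_k\tau|<\delta_1$, hence the desired inclusion. Uniform Cauchyness of $\{f(\cdot+t_n)\}$ and $\{g(\cdot+t_n)\}$ then puts $t_n-t_m$ into $T_\epsilon(h)$ for $n,m$ large, so $\{h(\cdot+t_n)\}$ is uniformly Cauchy and converges uniformly.

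For the reverse direction I argue contrapositively. If $\mathcal{M}(h)\not\subseteq\mathcal{M}(f,g)$, some exponent $\lambda$ of $h$ (with nonzero Fourier coefficient $c_\lambda$) lies outside $\mathcal{M}(f,g)$. I build $\{\tau_n\}$ with $f(\cdot+\tau_n)\to f$, $g(\cdot+\tau_n)\to g$ uniformly and $e^{i\lambda\tau_n}\to e^{i\theta}$ for some $\theta\not\equiv 0\mod(2\pi)$. The delicate step is the choice of $\theta$: let $k_0$ be the least positive integer with $k_0\lambda\in\mathcal{M}(f,g)$ if any such exists (else set $k_0:=\infty$); then $\lambda\notin\mathcal{M}(f,g)$ forces $k_0\ge 2$, and $\theta=2\pi/k_0$ (or $\theta=\pi$ when $k_0=\infty$) is nonzero modulo $2\pi$ and satisfies the compatibility hypothesis of Theorem \ref{kronecker} for every integer relation $k\lambda+\sum k_i\mu_i=0$ with $\mu_i\in\mathcal{M}(f,g)$. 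Enumerate $\mathcal{M}(f,g)=\{\mu_1,\mu_2,\dots\}$ and use Theorem \ref{kronecker} to pick $\tau_n$ with $|\mu_i\tau_n|<1/n$ for $i\le n$ and $|\lambda\tau_n-\theta|<1/n$, modulo $2\pi$. Then $e^{i\mu\tau_n}\to 1$ for every $\mu\in\mathcal{M}(f,g)$, and Lemma \ref{translation2} promotes this to uniform convergence $f(\cdot+\tau_n)\to f$ and $g(\cdot+\tau_n)\to g$. By compactness of the hull, pass to a subsequence along which $h(\cdot+\tau_n)\to\tilde h$ uniformly; a direct computation of the Fourier coefficient at $\lambda$ gives $c_\lambda(\tilde h)=e^{i\theta}c_\lambda\ne c_\lambda$, so $\tilde h\ne h$. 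Interleaving $\{\tau_n\}$ with the zero sequence produces a sequence $\{t_n\}$ along which $f$- and $g$-translates still converge uniformly while $\{h(\cdot+t_n)\}$ oscillates between $h$ and $\tilde h$, violating the convergence property.

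The main obstacle is the reverse direction, and specifically the choice of the target phase $\theta$. The naive attempt $\theta=\pi$ can fail the Kronecker compatibility condition when some positive multiple of $\lambda$ (but not $\lambda$ itself) already lies in $\mathcal{M}(f,g)$. The minimal-$k_0$ prescription realizes a nonzero element of the finite cyclic quotient $(\mathcal{M}(f,g)+\Z\lambda)/\mathcal{M}(f,g)\cong\Z/k_0\Z$ with $k_0\ge 2$, which is exactly what is needed to make Theorem \ref{kronecker} applicable on every finite truncation of the enumeration; once this is in place the rest is a routine extension of the single-function module containment argument to the two-function setting.
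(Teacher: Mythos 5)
Your forward direction is the paper's argument with the triangle-inequality step written out; the two match essentially line by line. The reverse direction is also built on the same key ingredients (Lemmas \ref{translation1}--\ref{translation2}, Theorem \ref{kronecker}, and the same case split on whether some integer multiple of the offending exponent lies in $\mathcal{M}(f,g)$, with the same phase $\theta = \pi$ or $2\pi/l_0$), but you package it differently: the paper assumes both the convergence property and the failure of module containment, deduces a translation-set inclusion $T_\delta(f)\cap T_\delta(g)\subseteq T_\epsilon(h)$ hence a set inclusion $\{|\lambda_n\tau|<\delta\}\subseteq\{|\lambda_0\tau|<\epsilon\}$, and then lets Kronecker produce a single $\tau$ contradicting that inclusion, whereas you construct an explicit sequence $\{\tau_n\}$ along which the $f$- and $g$-translates converge but the $h$-translates do not, invoking hull compactness and tracking a single Fourier coefficient to certify $\tilde h\neq h$, then interleaving with zero. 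Logically these are contrapositive twins; your version is a bit more constructive and costs you the compactness/Fourier-coefficient step, while the paper's is more economical. One small point in your favor: you explicitly take $k_0$ minimal so that $\{k:k\lambda\in\mathcal{M}(f,g)\}=k_0\mathbb{Z}$, which is needed for the divisibility claim $l_0\mid k_0$; the paper states only ``there exists $l_0\geq 2$'' and uses the minimality silently. Both proofs are correct.
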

\begin{proof}
	See Appendix \ref{1} for the detailed proof.
\end{proof}
\begin{corollary}
	If $f$ and $g$ are two almost periodic functions, then $h(t)=f(t+g(t))$ is also almost periodic. Moreover, $\mathcal{M}(h)\subseteq \mathcal{M}(f,g)$.
\end{corollary}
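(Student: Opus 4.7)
The plan is to invoke Proposition \ref{modified module containment} and reduce the corollary to a single uniform-convergence statement: whenever $\{t_n\}\subset\R$ is such that both $f(\cdot+t_n)$ and $g(\cdot+t_n)$ converge uniformly on $\R$, the sequence $h(\cdot+t_n)=f(\cdot+t_n+g(\cdot+t_n))$ also converges uniformly. This single statement simultaneously delivers the almost periodicity of $h$ (given an arbitrary real sequence, extract a subsequence along which both shifts converge uniformly by applying almost periodicity of $f$ and $g$ in succession) and, via the proposition, the module inclusion $\mathcal{M}(h)\subseteq\mathcal{M}(f,g)$.

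First I would fix such a sequence $\{t_n\}$, denote the uniform limits by $f^*=\lim f(\cdot+t_n)$ and $g^*=\lim g(\cdot+t_n)$, and rewrite
\begin{equation*}
h(t+t_n)=f\bigl((t+g(t+t_n))+t_n\bigr).
\end{equation*}
Because the convergence $f(y+t_n)\to f^*(y)$ is uniform in $y\in\R$, the substitution $y=t+g(t+t_n)$ is legitimate and yields
\begin{equation*}
\sup_{t\in\R}\bigl|h(t+t_n)-f^*\bigl(t+g(t+t_n)\bigr)\bigr|\longrightarrow 0.
\end{equation*}

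Next I would push $g(t+t_n)$ inside $f^*$ over to its limit $g^*(t)$. Since $f^*$ is almost periodic (being a uniform limit of shifts of $f$), it is uniformly continuous on $\R$; combined with $\|g(\cdot+t_n)-g^*\|_\infty\to 0$, this gives
\begin{equation*}
f^*\bigl(t+g(t+t_n)\bigr)\longrightarrow f^*\bigl(t+g^*(t)\bigr)
\end{equation*}
uniformly in $t$. Chaining the two estimates shows $h(t+t_n)\to f^*(t+g^*(t))$ uniformly on $\R$, which is exactly what the reduced statement requires.

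The only real subtlety is the final step, in which $g(t+t_n)$ is replaced by $g^*(t)$ inside $f^*$: it depends crucially on the uniform continuity of $f^*$ on the whole line, which itself rests on the fact that an almost periodic function is uniformly continuous on $\R$. Every other move, namely the simultaneous subsequence extraction and the substitution in the outer $f$, is a routine consequence of the uniformity (on all of $\R$, not merely on compact sets) built into the definition of almost periodicity, so I do not anticipate difficulty there.
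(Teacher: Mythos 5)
Your proposal is correct, and it is exactly the argument the paper leaves implicit: the corollary is stated directly after Proposition \ref{modified module containment} with no separate proof, so the intended route is precisely to verify the hypothesis of that proposition — that uniform convergence of $f(\cdot+t_n)$ and $g(\cdot+t_n)$ forces uniform convergence of $h(\cdot+t_n)$ — and this simultaneously yields almost periodicity of $h$ (via subsequence extraction from Bochner's criterion) and the module inclusion. Your two estimates (the uniform-in-$y$ substitution into the outer $f$, and the use of uniform continuity of the limit $f^*$ to replace $g(\cdot+t_n)$ by $g^*$) are the standard way to close this, and there is no gap.
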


\subsection{Existence of Diophantine frequencies}
The infinite dimensional frequency vector $\omega=(\omega_1,...,\omega_{i},...)$, where $|\omega_i|\leq 1$, $i \in \N^+$, is called Diophantine if  for any finitely supported non-zero integer vector $l = (l_1,...,l_{i},...)$,
\begin{equation}
 |(\omega,l)|= \Big\lvert \sum_{i}l_{i}\omega_{i} \Big\rvert > \frac{\gamma_0}{\prod_{i=1}^{\infty}(1+i^{1+\mu}\cdot|l_i|^{1+\mu})},
 \label{Dio}
\end{equation}
 where $\gamma_0$ and $\mu$ are two positive constants. Such frequency vector exists if $\gamma_0$ is small enough.
\begin{proposition}
	For any $\mu>0$, if $\gamma_0$ is small enough, then there exists $\omega$ satisfying (\ref{Dio}).
\end{proposition}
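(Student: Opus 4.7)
The plan is a measure-theoretic exclusion argument on the infinite product $[-1,1]^{\N^+}$, equipped with the product probability measure $\mu$ whose marginals are the normalised Lebesgue measure $\tfrac{1}{2}\,d\omega_i$ on $[-1,1]$. For each finitely supported nonzero $l\in\Z^{\N^+}$, set
$$\epsilon_l=\frac{\gamma_0}{\prod_{k\ge 1}(1+k^{1+\mu}|l_k|^{1+\mu})},\qquad R_l=\bigl\{\omega\in[-1,1]^{\N^+}\,:\,|(\omega,l)|\le\epsilon_l\bigr\}.$$
It suffices to show that $\mu\bigl(\bigcup_{l\ne 0}R_l\bigr)<1$ provided $\gamma_0$ is small, since the complement is exactly the set of $\omega$'s satisfying (\ref{Dio}), and is then nonempty.

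To bound $\mu(R_l)$, pick any index $i^{\ast}=i^{\ast}(l)$ with $l_{i^{\ast}}\ne 0$. Freezing the remaining coordinates makes $(\omega,l)=l_{i^{\ast}}\omega_{i^{\ast}}+c$ for a constant $c$, so the $\omega_{i^{\ast}}$-slice of $R_l$ is an interval of length at most $2\epsilon_l/|l_{i^{\ast}}|$, of $\tfrac{1}{2}d\omega_{i^{\ast}}$-mass at most $\epsilon_l/|l_{i^{\ast}}|\le\epsilon_l$. By Fubini, $\mu(R_l)\le\epsilon_l$. To sum over $l$, note that for any non-finitely-supported $l$ the product $\prod_k(1+k^{1+\mu}|l_k|^{1+\mu})^{-1}$ vanishes (infinitely many factors are bounded above by $\tfrac12$), so Tonelli factorises the sum:
$$\sum_{l\ne 0}\mu(R_l)\le\gamma_0\left(\prod_{k=1}^{\infty}\Bigl(1+2\sum_{n=1}^{\infty}\frac{1}{1+k^{1+\mu}n^{1+\mu}}\Bigr)-1\right).$$
Since $\sum_{n\ge 1}(1+k^{1+\mu}n^{1+\mu})^{-1}\le k^{-(1+\mu)}\zeta(1+\mu)$, the $k$-th factor is at most $1+2\zeta(1+\mu)k^{-(1+\mu)}$; because $\mu>0$ gives $\sum_k k^{-(1+\mu)}<\infty$, the infinite product converges to some constant $S_\mu<\infty$. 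Choosing $\gamma_0<1/(S_\mu-1)$ then makes the total bad measure strictly less than $1$, so a Diophantine $\omega$ exists.

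The main technical point is the convergence of that infinite product, and this is exactly what the specific form of the weight $1+k^{1+\mu}|l_k|^{1+\mu}$ in (\ref{Dio}) is designed to secure: without the $k^{1+\mu}$ factor, each term would be bounded below by some $1+c>1$ and the product would diverge, leaving no candidate Diophantine vectors. Everything else---the Fubini slicing, the separation into a product over coordinates, and the $\zeta$-function comparison---is routine, and the admissible threshold for $\gamma_0$ is explicit in terms of $\mu$ through $\zeta(1+\mu)$.
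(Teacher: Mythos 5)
Your proof is correct and follows essentially the same measure-theoretic exclusion argument as the paper: cover the bad set by the resonant zones $R_l$, bound each $\mu(R_l)$ by a Fubini slicing, factor the resulting sum over $l$ into an infinite product over coordinate indices, and use $\sum_k k^{-(1+\mu)}<\infty$ to show the total excluded measure is $O(\gamma_0)$, hence less than the whole space for $\gamma_0$ small. Your single-coordinate slice (freeze all coordinates except one $i^{\ast}$ with $l_{i^{\ast}}\neq 0$, giving $\mu(R_l)\le\epsilon_l$ directly) is in fact a little cleaner than the paper's estimate, which passes through a ball-volume bound and picks up an extra $\sqrt{2}^{\,|\mathrm{supp}\,l|-1}$ factor before factorising; the substance is identical.
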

\begin{proof}
See Appendix \ref{2} for the detailed proof.
\end{proof}
For simplicity, denote the set of infinite dimensional integer  vectors  with finite support by $Z^{\N^+}$, and denote $Z^{\N^+}-\{0\}$ by $Z^{\N^+}_{\star}$.
\begin{proposition}
	For any interval $[a,b] \subseteq \R$, if $\gamma$ is small enough, then there exists $\alpha \in [a,b]$ such that for any $l \in Z^{\N^+}_{\star}$ and $n \in \Z$
	\begin{equation}
	\left\lvert (\omega,l)\frac{\alpha}{2\pi}-n \right\rvert> \gamma \prod_{i=1}^{\infty} \frac{1}{1+i^{2+2\mu}\cdot|l_i|^{2+2\mu}},\label{Dio2}
	\end{equation}
where $\omega$ and $\mu$ are as in (\ref{Dio}).
\end{proposition}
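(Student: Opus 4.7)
The plan is a standard measure-estimate argument in the style of the first Diophantine proposition: I will show that the set of "bad" $\alpha\in[a,b]$ for which (\ref{Dio2}) fails has Lebesgue measure strictly less than $b-a$, provided $\gamma$ is chosen small enough, so that the complementary set of good $\alpha$ is non-empty (in fact of positive measure). The key point is that the exponents $2+2\mu$ appearing in the weight of (\ref{Dio2}) are exactly twice those $1+\mu$ appearing in (\ref{Dio}); this slack is what allows the double sum (over $l$ and $n$) to converge after invoking (\ref{Dio}).

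For each $l\in Z^{\N^+}_\star$ and $n\in\Z$, set
\begin{equation*}
C_l=\prod_{i=1}^{\infty}\frac{1}{1+i^{2+2\mu}|l_i|^{2+2\mu}},\qquad
B_{l,n}=\Big\{\alpha\in[a,b]:\Big|(\omega,l)\tfrac{\alpha}{2\pi}-n\Big|\le\gamma C_l\Big\}.
\end{equation*}
Since $(\omega,l)\ne0$ by the Diophantine property (\ref{Dio}), each $B_{l,n}$ is an interval of length at most $4\pi\gamma C_l/|(\omega,l)|$, and the number of $n\in\Z$ for which $B_{l,n}\cap[a,b]$ is non-empty is at most $|(\omega,l)|(b-a)/(2\pi)+2$. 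Consequently the contribution of a fixed $l$ to the bad set has measure at most $2\gamma C_l(b-a)+8\pi\gamma C_l/|(\omega,l)|$.

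Summing over $l$, I apply (\ref{Dio}) in the form $1/|(\omega,l)|<\gamma_0^{-1}D_l$ with $D_l=\prod_i(1+i^{1+\mu}|l_i|^{1+\mu})$, so the total measure of the bad set is bounded by
\begin{equation*}
2\gamma(b-a)\sum_{l\in Z^{\N^+}_\star}C_l+\frac{8\pi\gamma}{\gamma_0}\sum_{l\in Z^{\N^+}_\star}C_lD_l.
\end{equation*}
The first sum is finite because
$\sum_l C_l\le\prod_i\bigl(1+\tfrac{2\zeta(2+2\mu)}{i^{2+2\mu}}\bigr)$ converges. For the second, note that whenever $l_i\ne0$ one has $a_i:=i^{1+\mu}|l_i|^{1+\mu}\ge1$, so $\frac{1+a_i}{1+a_i^{2}}\le\frac{2}{a_i}$, while the factor equals $1$ when $l_i=0$; hence
\begin{equation*}
C_lD_l\le\prod_{i\in\mathrm{supp}(l)}\frac{2}{(i|l_i|)^{1+\mu}},\qquad
\sum_{l}C_lD_l\le\prod_{i=1}^{\infty}\Big(1+\frac{4\zeta(1+\mu)}{i^{1+\mu}}\Big)-1<\infty.
\end{equation*}
Calling these finite constants $B$ and $A$, the bad set has measure at most $2\gamma(b-a)B+8\pi\gamma A/\gamma_0$, which is strictly less than $b-a$ once $\gamma$ is small enough; any $\alpha$ in the complement satisfies (\ref{Dio2}).

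The main (really only) obstacle is verifying that $\sum_l C_lD_l<\infty$; this is where one must exploit the precise matching between the exponents $1+\mu$ in (\ref{Dio}) and $2+2\mu$ in (\ref{Dio2}), and it explains why the weights in (\ref{Dio2}) were chosen to be the \emph{square} of those in (\ref{Dio}). Everything else is a direct computation of the measure of a union of intervals and an application of Fubini/product-formula reasoning, paralleling the analogous existence proof for $\omega$ itself given in Appendix \ref{2}.
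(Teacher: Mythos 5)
Your proposal is correct and follows essentially the same measure-estimate strategy as the paper's Appendix~\ref{3}: estimate the measure of each bad interval $B_{l,n}$, count the relevant $n$, invoke (\ref{Dio}) to bound $1/|(\omega,l)|$, and close the double sum via the Euler-product formula. The only cosmetic difference is your explicit use of $\tfrac{1+a}{1+a^2}\le\tfrac{2}{a}$ for $a\ge1$ to control $C_lD_l$, where the paper bounds the same quantity slightly more loosely; both give $O(\gamma)$ and hence existence.
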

\begin{proof}
See Appendix \ref{3} for the detailed proof.
\end{proof}

In P\"{o}schel and Huang's papers, the authors consider $\mathcal{S}$ as a family of finite subsets $A$ in $\mathbb{Z}$ and the union of $\{A\}$ covers $\Z$. $\mathcal{M}$ reflects a spatial structure on $\mathbb{Z}$. In fact, for any $A \in \mathcal{S}$, $[A]$ gives a positive number which is called the weight of $A$. Also for an integer vector $k$ with finite support, $[[k]]$ is defined as $$[[k]] = \min\limits_{\mbox{supp}\, k \subseteq A \in \mathcal{S}}[A].$$ It can be proved that there exist an approximation function $\Delta_0$ and a probability measure $\mu$ on the parameter space $\mathbb{R}^{\mathbb{Z}}$ with support at any prescribed point such that the measure of the set of $\omega$ satisfying the following inequalities
\begin{equation*}
\begin{array}{ll}
| k,\omega | \geq \frac{c}{\Delta_0([[k]])\Delta_0(|k|)},\ \ \ \ c>0, ~\mbox{for all}~ k\neq 0 \in \mathbb{Z}_{\mathcal{S}}^{{\mathbb{Z}}},
\end{array}
\end{equation*}
is positive for a suitably small $c$, where $|k|=\sum \limits_{\lambda \in \mathbb{Z}} |k_{\lambda}|.$ The nonresonance condition for $\alpha$ is similarly defined.

The approximation function $\Delta_0$ here is somewhat abstract and in fact its form may be somewhat complex, for details see P\"{o}schel \cite{spatial} or Huang \cite{huangpeng2}, thus it may be somewhat difficult to check whether the nonresonance condition is satisfied. And the measure $\mu$ is a kind of product Gaussian measure, while the measure estimate in this paper is based on the usual product Lebesgue measure.

\subsection{The space of analytic almost periodic functions}

In this subsection, the space of the real analytic almost periodic functions which admit rapidly converging Fourier series expansions is defined.
\begin{definition}\label{func sp1}
	Assume $\omega$ is a fixed frequency vector satisfying (\ref{Dio}), $AP_r(\omega)$ is defined as the set consisted of all functions $$f:\{z:|\Im~z|<r\} \rightarrow \C,$$ which admit the form $f(z)=\sum_{l\in Z^{\N^+}} f_l e^{i(\omega,l)z}$ and satisfies $$\|f\|_r := \sum_{l\in Z^{\N^+}} |f_l| e^{r\|l\|} < \infty,$$ where $f_l\in \C$ and $\|l\|$ is defined as $\sum_{i=1}^{\infty} |l_i|\cdot i$.
\end{definition}
\begin{remark}
	It is clear that if the sum above is finite, then there are at most countable $l\in \Z^{\N^+}$ such that $f_l \neq 0$. Besides, the space $AP_r(\omega)$ is a complex Banach space equipped with the norm $\|f\|_r$.  Moreover, the function $f$ in $AP_r(\omega)$ is analytic in $z$ and when $z$ takes real values $f$ is almost periodic in $z$. The exponents of $f$ are $\{(\omega,l)=\sum_{i}l_i\omega_i: |\mbox{supp} ~ l | < \infty ,f_l \neq 0\}$.
\end{remark}

Assume $x$ and $y$ are complex variables, then the function space $AP_{r,s}(\omega)$ is defined similarly.
\begin{definition}\label{func sp2}
	$AP_{r,s}(\omega)$ is defined as the set of all functions $$f:\{x:|\Im~x|<r\}\times B_s(y_0) \rightarrow \C,$$ which admit the form $f(x,y)=\sum_{l\in Z^{\N^+}} f_l(y) e^{i(\omega,l)x}$ and satisfies  $$\|f\|_{r,s} = \sum_{l\in Z^{\N^+}} |f_l(y)|_s e^{r\|l\|} < \infty,$$ where $f_l$ are analytic functions in $y$, $|f_l|_s$ denotes the sup-norm
	  and $\|l\|$, $\omega$ are the same as in Definition \ref{func sp1}.
\end{definition}

It is obvious that $f(x,y)$ is analytic in $x,y$ and for any fixed $y$, $f(x,y)$ is almost period in real $x$, and $\mathcal{M}(f) = \mbox{span}\{\omega_i:i=1,...,k,... \}$.  Also there is a useful estimate for the product of two functions in $AP_{r,s}(\omega)$ as follows.
\begin{lemma}\label{homogeneous}
Assume $f,g \in AP_{r,s}(\omega)$, then $\|fg\|_{r,s} \leq \|f\|_{r,s}\|g\|_{r,s}$.
\end{lemma}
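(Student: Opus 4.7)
The plan is to unfold the Fourier expansions, regroup, and verify the required submultiplicativity from two elementary facts: the sub-additivity of the weight $\|\cdot\|$ on $Z^{\N^+}$ and the sub-multiplicativity of the sup-norm $|\cdot|_s$ on analytic functions of $y$.

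First I would write $f(x,y)=\sum_{l} f_l(y)e^{i(\omega,l)x}$ and $g(x,y)=\sum_m g_m(y)e^{i(\omega,m)x}$, multiply formally to obtain
\begin{equation*}
(fg)(x,y)=\sum_{n\in Z^{\N^+}}\Bigl(\sum_{l+m=n}f_l(y)g_m(y)\Bigr)e^{i(\omega,n)x}.
\end{equation*}
Because $\omega$ is Diophantine in the sense of \eqref{Dio}, distinct finitely supported integer vectors $n$ produce distinct exponents $(\omega,n)$, so this is the genuine Fourier expansion of $fg$ and the coefficient of $e^{i(\omega,n)x}$ is $(fg)_n(y)=\sum_{l+m=n}f_l(y)g_m(y)$.

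Next I would estimate term by term. For any $y$ in $B_s(y_0)$ and any $n$, the triangle inequality together with sub-multiplicativity of the sup-norm gives
\begin{equation*}
|(fg)_n|_s\le\sum_{l+m=n}|f_l|_s\,|g_m|_s.
\end{equation*}
The crucial structural input is that $\|l\|=\sum_i |l_i|\cdot i$ is sub-additive: $\|l+m\|\le\|l\|+\|m\|$, so $e^{r\|n\|}\le e^{r\|l\|}e^{r\|m\|}$ whenever $l+m=n$. Combining these inequalities,
\begin{equation*}
\|fg\|_{r,s}=\sum_{n}|(fg)_n|_s\,e^{r\|n\|}\le\sum_{n}\sum_{l+m=n}|f_l|_s|g_m|_s\,e^{r\|l\|}e^{r\|m\|}=\|f\|_{r,s}\|g\|_{r,s},
\end{equation*}
where the last equality is Fubini/Tonelli applied to the absolutely convergent double series.

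I do not anticipate a serious obstacle here; the only two points worth care are (i) justifying the reindexing by $n=l+m$, which is where the Diophantine condition enters to rule out accidental cancellations of exponents, and (ii) the interchange of summation in the final step, which is legitimate once the estimate is read as one between nonnegative series and the hypothesis $\|f\|_{r,s},\|g\|_{r,s}<\infty$ is invoked.
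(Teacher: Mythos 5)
Your proof is correct. The paper itself omits the proof of Lemma \ref{homogeneous}, remarking only that it is simple, so there is nothing to compare against; the argument you give — Cauchy product of the Fourier series, sub-multiplicativity of the sup-norm $|\cdot|_s$, sub-additivity of the weight $\|l\|=\sum_i|l_i|\cdot i$, and Tonelli to reorganize the nonnegative double sum — is evidently the intended one. Your parenthetical point about the reindexing is a nice touch worth retaining: the norm $\|\cdot\|_{r,s}$ is defined as a sum over the index set $Z^{\N^+}$ rather than over the exponent set, so one does need $l\mapsto(\omega,l)$ to be injective in order for the Cauchy product $(fg)_n=\sum_{l+m=n}f_lg_m$ to be the genuine coefficient of $e^{i(\omega,n)x}$; the Diophantine condition \eqref{Dio} supplies this (in fact bare nonvanishing of $(\omega,l)$ for $l\neq 0$ would already suffice, so your appeal to \eqref{Dio} is slight overkill but perfectly sound).
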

The proof is simple thus we omit it.
And some important properties of real analytic almost periodic functions are given as follows and the detailed proofs can be found in Appendix \ref{appen}. For simplicity, denote $D(r,s)$ the region $\{(x,y)\in \C^2: |\Im~x|<r, |y-\alpha|<s\}$, where $\alpha$ is as in (\ref{Dio2}) and denote $\bigcup_{r,s}AP_{r,s}(\omega)$ by $AP(\omega)$.

\begin{lemma}\label{lemma of function space}
	The following statements are true:\\
	(i) Assume $f(x,y)$, $g(x,y)$ $\in AP(\omega)$, then $f\pm g \in AP(\omega)$.\\
	(ii) Assume $f(x,y)$, $u(x,y)$ and $v(x,y)$ $\in AP(\omega)$, then $f(x+u,y+v) \in AP(\omega)$, in fact the norm of $f(x+u,y+v)$ can be estimated in a smaller domain. \\
	(iii) Assume $u(x,y)$ and $v(x,y)$ $\in AP_{r,s}(\omega)$, which are small enough in the sense of the $\|\cdot\|_{r,s}$ norm, then the mapping
\begin{equation*}
\mathfrak{U}:\left\{
\begin{array}{lc}
x = \xi + u(\xi,\eta), &  \\
y = \eta + v(\xi,\eta), &
\end{array}
\right.
\end{equation*}
is revertible in a smaller domain and the inverse mapping takes the form
\begin{equation*}
\mathfrak{U}^{-1}:\left\{
\begin{array}{lc}
\xi = x + u'(x,y), &  \\
\eta = y + v'(x,y),&
\end{array}
\right. \label{inverse mapping}
\end{equation*}
where $u'$, $v'$ belong to $AP_{r',s'}(\omega)$ for $r'<r$ and $s'<s$.
\end{lemma}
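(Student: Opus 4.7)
The plan is to reduce each of the three statements to manipulations on the Fourier expansion $f = \sum_{l \in Z^{\N^+}} f_l(y) e^{i(\omega,l)x}$ that preserve the almost-periodic structure, with the sub-multiplicativity $\|fg\|_{r,s} \le \|f\|_{r,s}\|g\|_{r,s}$ of Lemma \ref{homogeneous} as the main tool. The two structural observations underlying everything are (a) integer linear combinations of the $\omega_i$'s are closed under addition, so products of functions in $AP(\omega)$ remain in $AP(\omega)$; and (b) $|(\omega, l)| \le \|l\|$, since $|\omega_i| \le 1 \le i$ forces $|(\omega, l)| \le \sum_i |l_i| \le \sum_i i |l_i| = \|l\|$. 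Statement (i) is then immediate: on the intersection of domains, $f \pm g = \sum_l (f_l \pm g_l)(y) e^{i(\omega,l)x}$ is of the required form, and the triangle inequality gives $\|f \pm g\|_{r,s} \le \|f\|_{r,s} + \|g\|_{r,s}$ with $r = \min(r_1,r_2)$, $s = \min(s_1,s_2)$.

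For (ii), I would write
\begin{equation*}
f(x+u, y+v) = \sum_{l \in Z^{\N^+}} f_l(y+v)\, e^{i(\omega, l)u}\, e^{i(\omega, l)x},
\end{equation*}
and Taylor-expand $f_l(y+v) = \sum_{n \ge 0} \frac{1}{n!} f_l^{(n)}(y) v^n$ and $e^{i(\omega,l)u} = \sum_{n \ge 0} \frac{(i(\omega,l))^n}{n!} u^n$. Each monomial factor lies in $AP(\omega)$, so by iterated application of Lemma \ref{homogeneous} the product lies in $AP(\omega)$ as well. On a shrunken domain $D(r', s')$ with $r' + \|u\|_{r',s'} \le r$ and $s' + \|v\|_{r',s'} < s$, Cauchy's estimate gives $|f_l^{(n)}|_{s'} \le n!\, |f_l|_s / (s - s')^n$, hence $\|f_l(y+v)\|_{r',s'} \le C\, |f_l|_s$ for an absolute constant $C$ depending only on the ratio $\|v\|_{r',s'}/(s-s')$; and $\|e^{i(\omega,l)u}\|_{r',s'} \le e^{|(\omega,l)|\|u\|_{r',s'}} \le e^{\|l\|\|u\|_{r',s'}}$. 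Summing against $e^{r'\|l\|}$ and using $r' + \|u\|_{r',s'} \le r$ yields $\|f(x+u,y+v)\|_{r',s'} \le C\|f\|_{r,s}$. That the result actually has the required Fourier form follows by reindexing the triple sum according to the net frequency of each monomial; since every exponential appearing is of the form $e^{i(\omega, l')x}$ with $l' \in Z^{\N^+}$ built from $l$ and the indices of $u, v$, no foreign frequencies are introduced.

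For (iii), I would set up the fixed-point equation $u'(x,y) = -u(x + u', y + v')$, $v'(x,y) = -v(x + u', y + v')$, and view the map $\mathcal{T}(p,q) = \bigl(-u(x+p, y+q), -v(x+p, y+q)\bigr)$ as an operator on a small ball in $AP_{r',s'}(\omega)^2$. Part (ii) ensures $\mathcal{T}$ takes the ball into itself once $\|u\|_{r,s}, \|v\|_{r,s}$ are sufficiently small and $r - r'$, $s - s'$ are chosen appropriately. Contraction follows from the analytic mean value estimate, where Cauchy's inequality bounds $\|u_x\|_{r'',s''}, \|u_y\|_{r'',s''}$ by $\|u\|_{r,s}$ divided by the loss of radius on a further shrunken domain. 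The completeness of $AP_{r',s'}(\omega)$ supplies the unique fixed point $(u', v') \in AP_{r',s'}(\omega)^2$, and uniqueness of inverses identifies it with the genuine inverse of $\mathfrak{U}$. The main obstacle is the bookkeeping in (ii): one must verify that rearranging the triple sum (over $l$, over the Taylor order in $v$, and over the power series order of $e^{i(\omega,l)u}$) into a genuine Fourier series in $x$ preserves absolute summability in the $\|\cdot\|_{r',s'}$ norm. This rearrangement is controlled entirely by $|(\omega,l)| \le \|l\|$, which lets the exponential $e^{i(\omega,l)u}$ be absorbed into the decay factor $e^{r\|l\|}$ of the original norm with room to spare.
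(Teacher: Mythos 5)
Your proposal is correct and follows essentially the same route as the paper: write $f(x+u,y+v)=\sum_l f_l(y+v)\,e^{i(\omega,l)u}\,e^{i(\omega,l)x}$, Taylor-expand $e^{i(\omega,l)u}$ and $f_l(y+v)$, control the factors via the sub-multiplicativity of Lemma~\ref{homogeneous} and Cauchy's estimate, and absorb $e^{\|l\|\,\|u\|}$ into $e^{r'\|l\|}$ using $r'+\|u\|\le r$; the bound $|(\omega,l)|\le\|l\|$ that you state explicitly is used implicitly in the paper. For (iii) you invoke the contraction mapping principle directly where the paper cites the implicit function theorem in Banach spaces, but these are the same argument in substance.
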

\begin{proof}
	See Appendix \ref{4} for the detailed proof.
\end{proof}
\begin{remark}
	The estimates in (ii) is useful in this paper because the weighted norm is sensible under coordinate transformations, not the same as the sup-norm.
\end{remark}

\subsection{The intersection property}
\begin{definition}
	Let $\mathfrak{M}$ be the mapping given by (\ref{twist}). $\mathfrak{M}$ is said to have the intersection property if
	$$
	\mathfrak{M}(\Gamma) \cap \Gamma \neq \emptyset$$
	for any real analytic almost periodic curve $\Gamma : y = \phi(x)$, where $\phi$ belongs to $AP(\omega)$.
\end{definition}

It had been proved in Huang \cite{huangpeng2} that assume the mapping is exact symplectic, then the intersection property is valid, see also Ortega \cite{exact sym}. Roughly speaking, consider the domain $\mathbb{B}$ bounded by four curves: $\{x=t\}$, $\{x=T\}$, $\{y=r\}$ and $\Gamma$, and the domain  $\mathbb{B}_1$ bounded by four curves:  $\{x=t\}$, $\{x=T\}$, $\{y=r\}$ and $\mathfrak{M}(\Gamma)$, where $\{y=r\}$ is below $\Gamma$ and $\mathfrak{M}(\Gamma)$ when the angel variable $x \in [t,T]$. Under the assumption of exact symplectic condition, there exists a function $H(x,y) \in AP(\omega)$ such that $dH = y_1dx_1 - ydx$. Assume $f$, $g$ are small enough and $\phi$ is close enough to a constant, then from Lemma \ref{lemma of function space}, $x=x_1+w(x_1)$, where $w \in AP(\omega)$. By Green formula the difference of the area of the two domains will be
 \begin{equation*}
 \begin{aligned}
 \Delta(T)-\Delta(t)= &H(T+w(T),\phi(T+w(T))+g(T+w(T),\phi)) \\ &-H(t+w(t),\phi(t+w(t))+g(t+w(t),\phi)) \\ &+ \int_{t+w(t)}^{T+w(T)} \phi(x)dx \\
 &-\int_{t}^{T} \phi(x) dx .
 \end{aligned}
\end{equation*}
Thus $\Delta$ is almost periodic. Then from the property of almost periodic functions there exist $T$ and $t$ $\in \R$ such that the difference of the area is 0 and the intersection point exists.

\subsection{The main theorem}

\begin{theorem}
	Assume that the almost periodic mapping $\mathfrak{M}$ given in (\ref{twist}) has the intersection property, and $f$, $g \in AP_{r_0,s_0}(\omega)$ with $\omega$ satisfying the nonresonance condition (\ref{Dio}) and $\alpha$ satisfying (\ref{Dio2}),	
	then for any $\epsilon>0$, there exists  $\delta = \delta(\epsilon, r_0, s_0, \gamma)>0$ such that if
$$	\|f\|_{r_0,s_0} + \|g\|_{r_0,s_0} < \delta, $$
	then there is an invariant curve $\Gamma$:
	$$
	x = \xi + u(\xi), \ \
	y = v(\xi),
	$$
	where $u$, $v \in AP_{r_0/2}(\omega)$ satisfying $\|u\|_{r_0/2} + \|v-\alpha\|_{r_0/2} < \epsilon$.
	Moreover, the restriction of $\mathfrak{M}$ onto $\Gamma$ is
	~$\mathfrak{M}|_{\Gamma} : \xi_1 =\xi + \alpha$. \label{main}
\end{theorem}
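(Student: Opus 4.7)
The plan is to prove Theorem \ref{main} by a KAM-type Newton iteration in the weighted Banach scale $AP_{r,s}(\omega)$. I will construct inductively a sequence of real analytic almost periodic near-identity coordinate changes $\Phi_n(\xi,\eta) = (\xi + u_n, \eta + v_n)$ on a shrinking sequence of domains $D(r_n, s_n)$ with $r_n \searrow r_0/2$, chosen so that the conjugates $\mathfrak{M}_n = \Phi_n^{-1}\circ \mathfrak{M}_{n-1}\circ \Phi_n$ retain the form $(x+y+f_n, y+g_n)$ with perturbation size $\epsilon_n = \|f_n\|_{r_n,s_n} + \|g_n\|_{r_n,s_n}$ decreasing quadratically. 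The limit $\Phi_\infty = \lim_n \Phi_1 \circ \cdots \circ \Phi_n$ then makes $\eta = \alpha$ invariant and conjugates the induced dynamics to $\xi \mapsto \xi + \alpha$, and its evaluation at $\eta = \alpha$ yields the parametrization of $\Gamma$ claimed in the theorem.

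The core of a single step is to solve the linearized conjugacy equation. After expanding $\mathfrak{M}_{n-1}\circ \Phi_n = \Phi_n\circ(\xi+\alpha,\eta)$ and discarding quadratic remainders, this reduces to the pair of homological equations
\begin{align*}
u_n(\xi+\alpha, \eta) - u_n(\xi, \eta) - v_n(\xi, \eta) &= f_n(\xi, \eta), \\
v_n(\xi+\alpha, \eta) - v_n(\xi, \eta) &= g_n(\xi, \eta).
\end{align*}
Expanding in Fourier series $\phi(\xi,\eta) = \sum_{l\in Z^{\N^+}} \phi_l(\eta)e^{i(\omega,l)\xi}$, non-zero modes are inverted by dividing by $e^{i(\omega,l)\alpha}-1$, which by (\ref{Dio2}) is bounded below by $c\gamma \prod_i(1+i^{2+2\mu}|l_i|^{2+2\mu})^{-1}$. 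The zero-mode of the $v_n$-equation requires the $\xi$-mean of $g_n$ to vanish, and this is exactly where the intersection property of $\mathfrak{M}$ enters: applied to the current approximate invariant curve, the Green-formula argument sketched in Section 2.6 forces this mean to vanish, making the equation solvable. The compatibility condition of the $u_n$-equation then fixes the $\xi$-mean of $v_n$, while the $\xi$-mean of $u_n$ is free and chosen to vanish.

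The decisive analytic step is the small-divisor estimate in the weighted norm: if $\psi \in AP_{r,s}(\omega)$ has vanishing $\xi$-mean, the solution of $\phi(\xi+\alpha,\eta) - \phi(\xi,\eta) = \psi(\xi,\eta)$ must obey
$$\|\phi\|_{r-\sigma,s} \leq \gamma^{-1} K(\sigma,\mu)\,\|\psi\|_{r,s},$$
with a loss factor $K(\sigma,\mu)$ growing at most sub-exponentially in $\sigma^{-1}$. This reduces to bounding
$$\sup_{l \in Z^{\N^+}_{\star}} \prod_i\bigl(1+i^{2+2\mu}|l_i|^{2+2\mu}\bigr)\,e^{-\sigma\|l\|} \leq K(\sigma,\mu),$$
which I will obtain by a componentwise trade-off using $\log(1+i^{2+2\mu}|l_i|^{2+2\mu}) \leq \tfrac{\sigma}{2}\,i|l_i| + c(\sigma,\mu,i)$, together with $\|l\| = \sum_i i|l_i|$ and the observation that $|\mathrm{supp}\,l| \leq \sqrt{2\|l\|}$. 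This is the main technical obstacle, because the small divisors here are organized as an infinite product indexed by all frequency components rather than a single polynomial in $|l|$, forcing the classical Paley-type majoration to be replaced by a per-component balance against the linear weight $\|l\|$. Once this estimate is secured, the remaining elements of the iteration, namely invertibility of each $\Phi_n$ via Lemma \ref{lemma of function space}(iii), module containment of the new perturbations via Proposition \ref{modified module containment}, a geometric choice of $r_n, s_n, \sigma_n$, and the telescoping summability that furnishes $\|u_\infty\|_{r_0/2} + \|v_\infty - \alpha\|_{r_0/2} < \epsilon$, follow from standard Newton bookkeeping once $\delta$ is chosen small enough.
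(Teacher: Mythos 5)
Your overall plan---a KAM/Newton iteration in the scale $AP_{r,s}(\omega)$ with shrinking domains, homological equations inverted via the Diophantine condition (\ref{Dio2}), and a telescoping sum for the limit transformation---is the same skeleton as the paper's proof. Your componentwise log-trading idea for the small-divisor supremum is a legitimate alternative to the paper's split of the product at index $\big[\delta^{-1.5}\big]$, and the rest of the bookkeeping (Lemma \ref{lemma of function space}(iii) for invertibility, Cauchy estimates, geometric choice of $r_n,s_n$) is as in the paper.

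However, there is a real conceptual gap in the way you invoke the intersection property. You write that ``the Green-formula argument sketched in Section 2.6 forces this mean to vanish, making the equation solvable.'' It does not. The intersection property, applied to a horizontal curve $\eta=\eta_0$, only produces a single point $\xi_0$ where the new $\eta$-perturbation vanishes; it says nothing about the $\xi$-average. In general $[g_n](\eta)\neq 0$, and the homological equation for $v_n$ as you wrote it is simply unsolvable. The paper's actual device is to solve the modified equation
\begin{equation*}
v(\xi+\alpha,\eta)-v(\xi,\eta)=g(\xi,\eta)-[g](\eta),
\end{equation*}
so that the compatibility condition is met by construction, and then to carry the leftover $[g](\eta)$ into the new remainder $g_+$. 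The intersection property is used \emph{a posteriori}, not to cancel the mean but to bound it: since $\mathfrak{U}^{-1}\mathfrak{M}\mathfrak{U}$ again has the intersection property, for each real $\eta_0$ near $\alpha$ there is $\xi_0$ with $g_+(\xi_0,\eta_0)=0$, and comparing with the linearization $h(\eta)=[g](\alpha)+[g]_\eta(\alpha)(\eta-\alpha)$, whose distance from $g_+$ is already controlled, gives $|h(\eta_0)|\leq Q$ for real $\eta_0$, hence $\|h\|_{r^+,s^+}\leq 3Q$. Without this indirect estimate the mean-value contribution is only linear in $\epsilon$ and the Newton scheme does not close (this is also why the paper must shrink $s_n$ along with $r_n$, choosing $s_n=\epsilon_n^{2/3}$ so that the $(s^+/s)^2\epsilon$ term from approximating $[g]$ by $h$ is subordinate). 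Your proposal, as stated, would stall at the very first step because it tries to solve an equation whose solvability condition is generically violated; you need to build in the subtraction of $[g]$ and relocate the role of the intersection property from ``makes the equation solvable'' to ``bounds the persistent mean.''
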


In many applications, the small twist mapping
\begin{equation*}
\mathfrak{M}_\delta:\left\{
\begin{array}{lc}
x_1=x+\delta y+ f(x,y,\delta), &\\
& (x,y) \in \R\times[a,b]\\
y_1=y+ g(x,y,\delta), &
\end{array}\right.
\end{equation*}
are met, where $0<\delta<1$,  $f(x,y,\delta)$, $g(x,y,\delta)$ $\in AP_{r,s}(\omega)$ for each $\delta>0$. Moreover, $$\|f(\cdot,\cdot,\delta)\|_{r,s} +\|g(\cdot,\cdot,\delta)\|_{r,s} \leq \delta \cdot \epsilon.$$

\begin{corollary}\label{small twist}
Choose the fixed $\gamma$, $\mu>0$ and $\epsilon$ small enough. Also choose $\alpha \in [a+\gamma,b-\gamma]$ satisfying $$\left|(l,\omega)\frac{\delta\alpha}{2\pi}-n\right|>\prod_{i=1}^{\infty}\frac{\gamma}{1+|l_i|^{2+2\mu}\cdot i^{2+2\mu}}$$ for any $n \in \Z$, $l \in Z^{\N^+}_{\star}$. Furthermore, assume the mapping $\mathfrak{M}_{\delta} $ satisfies the intersection property for any $\delta>0$.

Then for any fixed $\delta$, there exists an invariant curve under the mapping denoted by $$\Gamma_{\delta}: x=\xi+\phi(\xi,\delta), y=\psi(\xi,\delta),$$ where $\phi(\cdot,\delta)$, $\psi(\cdot,\delta) \in AP(\omega)$. Furthermore, the restriction of $\mathfrak{M}_{\delta}$ onto $\Gamma_{\delta}$ takes the form $\xi_1=\xi+\delta\alpha$.
\end{corollary}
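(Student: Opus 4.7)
The plan is to reduce the small twist mapping $\mathfrak{M}_\delta$ to a mapping of the form (\ref{twist}) by a linear rescaling of the $y$-coordinate, so that Theorem \ref{main} applies directly. Setting $\tilde y := \delta y$, one checks that $\mathfrak{M}_\delta$ turns into
\begin{equation*}
\tilde{\mathfrak{M}}_\delta: \quad x_1 = x + \tilde y + \tilde f(x,\tilde y,\delta), \qquad \tilde y_1 = \tilde y + \tilde g(x,\tilde y,\delta),
\end{equation*}
with $\tilde f(x,\tilde y,\delta) = f(x,\tilde y/\delta,\delta)$ and $\tilde g(x,\tilde y,\delta) = \delta\, g(x,\tilde y/\delta,\delta)$. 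In this form the twist is the standard one, the role of the rotation number $\alpha$ in Theorem \ref{main} is played by $\tilde\alpha := \delta\alpha$, and the nonresonance hypothesis imposed on $\alpha$ in the corollary is exactly the condition (\ref{Dio2}) for $\tilde\alpha$.

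Next I would verify the remaining hypotheses of Theorem \ref{main} for $\tilde{\mathfrak{M}}_\delta$. The analyticity domain $\{(x,y):|\Im x|<r,\ |y-\alpha|<s\}$ becomes $\{(x,\tilde y):|\Im x|<r,\ |\tilde y-\delta\alpha|<\delta s\}$; since the sup-norms $|f_l|_s$ are unchanged under the $y$-rescaling while the chain rule contributes an extra $\delta$ to $\tilde g$, one obtains $\tilde f,\tilde g \in AP_{r,\delta s}(\omega)$ with $\|\tilde f\|_{r,\delta s}\leq \delta\epsilon$ and $\|\tilde g\|_{r,\delta s}\leq \delta^2\epsilon$. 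The intersection property for $\tilde{\mathfrak{M}}_\delta$ is inherited from that of $\mathfrak{M}_\delta$: the map $\phi\mapsto\delta\phi$ is an $AP(\omega)$-homeomorphism between graphs, and an intersection point of $\mathfrak{M}_\delta(\Gamma)$ with $\Gamma$ produces one of $\tilde{\mathfrak{M}}_\delta(\tilde\Gamma)$ with $\tilde\Gamma$ at the same abscissa. Theorem \ref{main} then yields an invariant curve $\tilde\Gamma:\ x=\xi+u(\xi),\ \tilde y=v(\xi)$ on which the dynamics is $\xi_1=\xi+\delta\alpha$; undoing the rescaling gives $\Gamma_\delta$ with $\phi(\xi,\delta):=u(\xi)$ and $\psi(\xi,\delta):=v(\xi)/\delta$, which is the invariant curve claimed in the corollary.

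The main delicacy is to verify that the smallness hypothesis of Theorem \ref{main} can indeed be met: its threshold $\delta_0(\cdot,r_0,s_0,\gamma)$ depends on $s_0=\delta s$, which shrinks with $\delta$, so one must ensure that $\|\tilde f\|_{r,\delta s}+\|\tilde g\|_{r,\delta s}\leq (1+\delta)\delta\epsilon$ still lies below $\delta_0(\cdot,r,\delta s,\gamma)$. Because the corollary is stated for a fixed $\delta>0$, this is only a matter of choosing $\epsilon$ small enough (possibly in terms of $\delta$); a truly uniform-in-$\delta$ statement would instead require tracking how all the constants in the iterative scheme of Sections 3--4 scale with $\delta$, and exploiting the extra factor of $\delta$ visible in $\tilde g$.
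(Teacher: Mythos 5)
Your rescaling $\tilde y = \delta y$ is the natural reduction and is almost certainly what the authors have in mind (the paper states this corollary without proof, following it only with ``In applications, usually $\delta$ tends to zero''). The bookkeeping is right: $\tilde f, \tilde g$ live on $D(r,\delta s)$ with $\|\tilde f\|_{r,\delta s}=\|f\|_{r,s}$ and $\|\tilde g\|_{r,\delta s}=\delta\|g\|_{r,s}$, the rotation becomes $\delta\alpha$ and the corollary's arithmetic hypothesis is exactly (\ref{Dio2}) for $\delta\alpha$, and the intersection property passes through the affine change of variables. So as far as it goes, the argument is sound.

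Where you are too generous with yourself is the last paragraph. The corollary's quantifier order is: fix $\gamma,\mu,\epsilon$ \emph{first}, then for every $\delta$ obtain the invariant curve; this is a genuinely uniform-in-$\delta$ small twist theorem, as it must be, since the application in Section 5 sends $\delta\to 0$. After rescaling, the $\tilde y$-ball has radius $s_0=\delta s$, which collapses with $\delta$. The iteration of Section 4 locks in $s_n=\epsilon_n^{2/3}$, which requires the given domain to satisfy $s_0\gtrsim\epsilon_0^{2/3}$; in the rescaled variables this is $\delta s\gtrsim(\delta\epsilon)^{2/3}$, i.e.\ $\epsilon\lesssim\delta^{1/2}s^{3/2}$, which fails for fixed $\epsilon$ as $\delta\to0$. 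So reading the corollary as ``for fixed $\delta$, choose $\epsilon$ small enough in terms of $\delta$'' is a misreading, and your proof only yields that weaker statement; the gap you flag is substantive, not cosmetic.

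The fix is the one you gesture at: redo Sections 3--4 without rescaling $y$, carrying the small twist $\delta$ explicitly. The homological equations then have shift $\delta\alpha$ (covered by the corollary's hypothesis in place of (\ref{Dio2})), the strip widths $s_n$ stay $\delta$-independent, and the extra factor of $\delta$ that you correctly noticed in $\tilde g$ is exactly what compensates the small twist in the quadratic error estimate $\epsilon^2/s$. It is worth noting, though, that in the paper's actual application the perturbations are $O(\delta^3)$, so the effective $\epsilon$ there is $\sim\delta^2\ll\delta^{1/2}$; for that use your rescaling argument does suffice, and the gap only bites for the corollary as a free-standing uniform statement.
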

In applications, usually $\delta$ tends to zero.

\section{The KAM step}
The proof of the main theorem is based on the KAM approach, it suffices to find a sequence of coordinate transformations which make the mapping more and more close to
\begin{equation*}
\left\{
\begin{array}{lc}
 x_1 = x + y,&\\[0.2cm]
 y_1 =y.&
\end{array}
\right.
\end{equation*}
\subsection{Construction of change of variables}
The form of transformation $\mathfrak{U}$ is assumed as
\begin{equation}
\mathfrak{U}:\left\{
\begin{array}{lc}
x = \xi + u(\xi,\eta),&\\[0.2cm]
y =\eta + v(\xi,\eta),&
\end{array}
\right.\label{transformation}
\end{equation}
where $u$ and $v$ $\in AP(\omega)$. Under this transformation, the original mapping $\mathfrak{M}$ is changed into
\begin{equation}
\mathfrak{U}^{-1}\mathfrak{M}\mathfrak{U} : \left\{
\begin{array}{lc}
\xi_1 = \xi + \eta + f_+(\xi,\eta),&\\[0.2cm]
\eta_1 =\eta + g_+(\xi,\eta),&
\end{array}
\right.\label{new eq}
\end{equation}
where $f_+$ and $g_+$ $\in AP(\omega)$ and are defined in a smaller domain $D(r_+,s_+)$, moreover, $\|f_+\|_{r^+,s^+}+\|g_+\|_{r^+,s^+} \ll \|f\|_{r,s}+\|g\|_{r,s}$. For simplicity, denote
$$\epsilon=\|f\|_{r,s}+\|g\|_{r,s},$$ where
$D(r,s)$ is defined in section 2 and $\delta := r-r^+$, $\rho : = s-s^+$.
From (\ref{transformation}) and (\ref{new eq}), we obtain
\begin{equation*}
\left\{
\begin{aligned}
&f_+(\xi,\eta) = f(\xi+u,\eta+v)+v(\xi,\eta)+u(\xi,\eta)-u(\xi+\eta+f_+,\eta+g_+),\\[0.2cm]
&g_+(\xi,\eta) = g(\xi+u,\eta+v)+v(\xi,\eta)-v(\xi+\eta+f_+,\eta+g_+).
\end{aligned}
\right.
\end{equation*}

To solve $u$ and $v$, as in the periodic case, we solve the linear equations
\begin{equation*}
\left\{
\begin{aligned}
&u(\xi+\alpha,\eta)-u(\xi,\eta)=v(\xi,\eta)+f(\xi,\eta),\\[0.2cm]
&v(\xi+\alpha,\eta)-v(\xi,\eta)=g(\xi,\eta).
\end{aligned}
\right.\label{linear eq}
\end{equation*}
The difference equations will introduce the small divisor, and it can be solved only if the mean value of $g$ over the first variable vanishes. For this reason, we consider the modified homological equations
\begin{equation}
\left\{
\begin{aligned}
&u(\xi+\alpha,\eta)-u(\xi,\eta)=v(\xi,\eta)+f(\xi,\eta),\\[0.2cm]
&v(\xi+\alpha,\eta)-v(\xi,\eta)=g(\xi,\eta)-[g](\eta),
\end{aligned}
\right.\label{linear eq2}
\end{equation}
where $[\ ]$ denotes the mean value of the function over the first variable.

\subsection{Estimates of u, v}

In order to solve $u$, $v$ from the equation (\ref{linear eq2}), it suffices to consider the following difference equation firstly
\begin{equation}
s(x+\alpha)-s(x)=h(x), \label{h eq}
\end{equation}
where $h \in AP_r(\omega)$.
\begin{lemma}\label{lemma hol}
	Suppose $h \in AP_r(\omega)$, where $\omega$ satisfies (\ref{Dio}) and $\alpha$ satisfies (\ref{Dio2}), then for any $r'$ satisfying $0<r'<r$, the difference equation (\ref{h eq}) has the unique solution $s\in AP_{r'}(\omega)$ with 0 mean value if and only if $h$ takes 0 mean value. In this case, there is the norm estimate
	\begin{equation*}
	\|s\|_{r'} \leq \gamma^{-1}\|h\|_re^{\delta^{-2}}.
	\end{equation*}
\end{lemma}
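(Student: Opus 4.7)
The approach is the standard Fourier/small-divisor scheme for KAM homological equations. First, expand $h(x)=\sum_{l\in Z^{\N^+}} h_l\,e^{i(\omega,l)x}$ and look for a solution in $AP_{r'}(\omega)$ of the form $s(x)=\sum_{l} s_l\,e^{i(\omega,l)x}$. Since the series for $h$ converges absolutely (by $\|h\|_r<\infty$), substituting into \eqref{h eq} and matching coefficients is rigorous and yields the scalar equations
\begin{equation*}
s_l\,\bigl(e^{i(\omega,l)\alpha}-1\bigr)=h_l,\qquad l\in Z^{\N^+}.
\end{equation*}
For $l=0$ the left side vanishes, so solvability forces $h_0=[h]=0$, giving the ``zero mean'' necessity; conversely, when $h_0=0$ we impose the normalization $s_0=0$, and for $l\neq 0$ the Diophantine hypothesis \eqref{Dio2} on $\alpha$ keeps $(\omega,l)\alpha$ away from $2\pi\Z$, so $e^{i(\omega,l)\alpha}-1\neq 0$ and $s_l:=h_l/(e^{i(\omega,l)\alpha}-1)$ is forced. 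Uniqueness in $AP_{r'}(\omega)$ with zero mean is then immediate.

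Second, I would turn \eqref{Dio2} into a quantitative lower bound for the denominator. Pick $n\in\Z$ nearest to $(\omega,l)\alpha/(2\pi)$ so that $|(\omega,l)\alpha-2\pi n|\le\pi$, and use $|\sin x|\ge(2/\pi)|x|$ on $[-\pi/2,\pi/2]$ to obtain
\begin{equation*}
|e^{i(\omega,l)\alpha}-1|=2\bigl|\sin\bigl(\tfrac{(\omega,l)\alpha}{2}-\pi n\bigr)\bigr|\;\ge\;\tfrac{2}{\pi}|(\omega,l)\alpha-2\pi n|\;\ge\;4\gamma\prod_{i=1}^{\infty}\frac{1}{1+i^{2+2\mu}|l_i|^{2+2\mu}}.
\end{equation*}
Hence $|s_l|\le (4\gamma)^{-1}|h_l|\prod_i(1+i^{2+2\mu}|l_i|^{2+2\mu})$, and the remaining task is the weight estimate that converts this pointwise bound into a norm bound with only a modest loss in the radius.

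The core technical step is to show that for every finitely supported $l$,
\begin{equation*}
\prod_{i=1}^{\infty}\bigl(1+i^{2+2\mu}|l_i|^{2+2\mu}\bigr)\;\le\;e^{\delta^{-2}}\,e^{\delta\|l\|},
\end{equation*}
(up to harmless absolute constants), after which $\|s\|_{r'}=\sum_l|s_l|e^{r'\|l\|}\le(4\gamma)^{-1}e^{\delta^{-2}}\sum_l|h_l|e^{(r'+\delta)\|l\|}=(4\gamma)^{-1}e^{\delta^{-2}}\|h\|_r$ follows at once. Taking logarithms, the inequality reduces to bounding $\sum_i\log(1+i^{2+2\mu}|l_i|^{2+2\mu})$ by $\delta\|l\|+O(\delta^{-2})$. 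My plan is to fix a threshold $M=M(\delta)$ and split according to the size of $i|l_i|$: for indices with $0<i|l_i|\le M$ there are at most $M$ such $i$ (since $i\le i|l_i|\le M$) and each contributes at most $\log(1+M^{2+2\mu})$; for indices with $i|l_i|>M$ there are at most $\|l\|/M$ such $i$, and on them the elementary bound $\log(1+x)\le\epsilon x+\log(1/\epsilon)$ with $\epsilon\sim\delta$ controls the $\log(i|l_i|)$ part by $(\delta/2)i|l_i|+O(\log(1/\delta))$. Choosing $M$ of order $\delta^{-1}\log(\delta^{-1})$ makes both pieces proportional to $\delta\|l\|$, leaving a purely $l$-independent remainder of size $M\log(1+M^{2+2\mu})=O(\delta^{-2})$, which is exactly what feeds the exponential $e^{\delta^{-2}}$ in the final estimate.

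The main obstacle is precisely this combinatorial weight bound: the naive count of nonzero components of $l$ may be as large as $\|l\|$ itself, so one cannot simply sum a constant per nonzero block. The threshold splitting—paying $O(\delta^{-2})$ up front to neutralize the ``many small blocks'' contribution, and using the Young-type inequality $\log(1+x)\le\epsilon x+\log(1/\epsilon)$ on the ``large'' blocks—is what makes the geometric series in $\|l\|$ survive and gives the advertised loss factor $e^{\delta^{-2}}$.
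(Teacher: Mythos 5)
Your proof is correct, and up to the small-divisor reduction it follows the same Fourier scheme as the paper; the genuine difference lies in the combinatorial step that turns the pointwise bound on $|s_l|$ into a norm bound at the reduced radius. The paper estimates the supremum over $l$ of $\prod_i\bigl(1+i^{2+2\mu}|l_i|^{2+2\mu}\bigr)e^{-\delta\|l\|}$ by first factoring the sup over the independent components $l_i$ and then cutting the index $i$ at a fixed, $l$-independent threshold $i_0\approx\big[\delta^{-3/2}\big]+1$: for $i>i_0$ with $l_i\neq 0$ the weight $e^{-\delta|l_i|i}$ already forces the factor to be $\le 1$, and the finitely many remaining factors, each $O(\delta^{-C(\mu)})$ after maximizing over $l_i$, contribute $e^{O(\delta^{-3/2}\log(1/\delta))}\le e^{\delta^{-2}}$. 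You instead prove the per-$l$ inequality
\begin{equation*}
\prod_{i}\bigl(1+i^{2+2\mu}|l_i|^{2+2\mu}\bigr)\le C\,e^{\delta^{-2}}e^{\delta\|l\|}
\end{equation*}
by partitioning the indices according to the size of $i|l_i|$ (an $l$-dependent split), handling the bounded range by the cardinality count $\#\{i:\,0<i|l_i|\le M\}\le M$ and the unbounded range by the Young-type inequality $\log(1+x)\le\epsilon x+\log(1/\epsilon)$, with $M\sim\delta^{-1}\log(1/\delta)$. Both routes land on $e^{o(\delta^{-2})}\le e^{\delta^{-2}}$ and are therefore equally adequate; yours makes the origin of the loss $\delta^{-2}$ more visible and avoids passing to the supremum factor by factor, while the paper's hard index cutoff has slightly simpler bookkeeping because the tail factors are immediately killed without any case analysis on $i|l_i|$. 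You also retain the factor $4$ from $|e^{i\theta}-1|=2|\sin(\theta/2)|\ge\tfrac{2}{\pi}\,\mathrm{dist}(\theta,2\pi\Z)$, which the paper silently drops; this only improves the constant and does not affect the statement as given.
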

\begin{proof}
	From $h \in AP_r(\omega)$ and the 0 mean value condition, $h$ takes the form
	$$h(x)=\sum_{l\in Z^{\N^+}_{\star}} h_l e ^{i(\omega,l)x}.$$
	Assume
	$$s(x)=\sum_{l\in Z^{\N^+}_{\star}} s_l e ^{i(\omega,l)x},$$	
and by straight calculation, $s_l$ takes the form
$$s_l=\frac{h_l}{e^{i(\omega,l)\alpha-1}}.$$
Note that the mean value $h_0 = \lim_{T \rightarrow \infty} \int_{0}^{T} h(t) dt$ must be 0 to make sure the existence of $s$. From (\ref{Dio2}) and assume $s$ takes 0 mean value it follows that
\begin{equation*}
	\|s\|_{r'} \leq \gamma^{-1} \cdot \sum_{l\in Z^{\N^+}_{\star}} \bigg(\prod_{i=1}^{\infty} \Big(1+|l_i|^{2+\mu}\cdot i^{2+2\mu}\Big)\bigg)\cdot e^{-(r-r')\|l\|}\cdot |h_l| \cdot e^{r\|l\|}.
\end{equation*}
Further,
\begin{equation*}
\begin{aligned}
\|s\|_{r'} \leq& \gamma^{-1} \cdot \|h\|_r\ \cdot \sup_{l\in Z^{\N^+}_{\star}}\left\{\prod_{i=1}^{\infty}\Big(1+|l_i|^{2+\mu}\cdot i^{2+2\mu}\Big)\cdot e^{-\delta\|l\|}\right\}\\
\leq& \ \gamma^{-1} \cdot \|h\|_r \cdot \sup_{l\in Z^{\N^+}_{\star}}\left\{\prod_{i=1}^{\infty} \max\Big\{2|l_i|^{2+\mu}\cdot i^{2+2\mu} \cdot e^{-\delta |l_i|i},1\Big\} \right\}\\
\leq& \ \gamma^{-1} \cdot \|h\|_r \cdot \sup_{l\in Z^{\N^+}_{\star}}\Bigg\{\prod_{i=1}^{\big[\frac{1}{\delta^{1.5}}\big]+1}\max\left\{2|l_i|^{2+\mu}\cdot i^{2+2\mu} \cdot e^{-\delta |l_i|i},1\right\}  \\
&\times \prod_{i=\big[\frac{1}{\delta^{1.5}}\big]+1}^{\infty}\max\Bigg\{\frac{2e^{\frac{-1}{\sqrt{\delta}}}}{\delta^{1.5\cdot (2+2\mu)}},1\Bigg\}\Bigg\}\\
\leq& \ \gamma^{-1} \cdot \|h\|_r \cdot \sup_{l\in Z_{\N^+}^{\star}}\Bigg\{\prod_{i=1}^{\big[\frac{1}{\delta^{1.5}}\big]+1}\max\Big\{2|l_i|^{2+\mu}\cdot i^{2+2\mu} \cdot e^{-\delta l_ii},1\Big\}\Bigg\}\\
\leq& \ \gamma^{-1}\|h\|_re^{\delta^{-2}},
\end{aligned}
\end{equation*}
where $\delta=r-r'$ is small enough.
Then the conclusion is proved.
\end{proof}

Now the equation (\ref{linear eq2}) can be solved in a smaller domain from Lemma \ref{lemma hol}. Denote the solution to the second equation which has 0 mean value by $\widetilde{v}$. And in the first equation, to make the mean value over the first variable on the right side vanish, the mean value of $v$ must satisfy that
\begin{equation*}
[v](\eta)=-[f](\eta).
\end{equation*}
Then as a consequence, we have
\begin{equation*}
\|[v]\|_{r,s} \leq \|f\|_{r,s}.
\end{equation*}
Thus there is a solution $v(\xi,\eta) = \widetilde{v}-[f]$ to the second equation, and the first equation can be also solved. Specifically, denote $p(\xi,\eta) = v(\xi,\eta) + f(\xi,\eta)$ defined in $D(r-\delta/16,s)$, which satisfies
$$\|p\|_{r-\delta/16,s} \leq e^{\frac{c_1}{\delta^2}} \cdot \epsilon,$$ here $c_1$ depends only on $\mu$. Thus there is a solution $u(\xi,\eta)$ defined in $D(r-\delta/8,s)$ satisfying
\begin{equation*}
 \|u(\xi,\eta)\|_{r-\delta/8,s}+ \|v(\xi,\eta)\|_{r-\delta/8,s} \leq e^{\frac{c_2}{\delta^2}} \cdot \epsilon,
 \end{equation*}
 where $c_2$ depends only on $c_1$ and $\mu$.

For simplicity, denote $D^{i} = D(r-i\cdot \delta/4,s- i\cdot \rho/4)$. And by  Cauchy's estimate \ref{Cauchy}, if assuming that $\delta$ is small enough then the derivatives of $u$, $v$ in the domain $D^{(1)}$ satisfy that
\begin{equation}
\left \{
\begin{array}{lc}
\|u_{\xi}\|_{r-\delta/4,s-\rho/4}+\|v_{\xi}\|_{r-\delta/4,s-\rho/4} \leq e^{\frac{c_3}{\delta^2}}\cdot \epsilon,&\\
\\
\|u_{\eta}\|_{r-\delta/4,s-\rho/4}+\|v_{\eta}\|_{r-\delta/4,s-\rho/4} \leq \frac{1}{\rho} \cdot e^{\frac{c_3}{\delta^2}}\cdot \epsilon,&
\end{array}
\right.\label{condition 1}
\end{equation}
where $c_3>c_2$. Thus if \begin{equation*}
e^{\frac{c_3}{\delta^2}}\cdot \epsilon < \frac{\max\{\delta,\rho\}}{4},
\end{equation*}
then the mapping $\mathfrak{U}$ maps $D(r^+,s^+)$ into $D^{(3)}$. And if $\epsilon < \frac{\rho}{4}$ and $\epsilon + s - \frac{3\rho}{4} < \frac{\delta}{4}$, then the mapping $\mathfrak{M}$ maps $D^{(3)}$ to $D^{(2)}$. Moreover by the property (iii) of Lemma \ref{lemma of function space}, $\mathfrak{U} ^{-1}$ is well defined in $D^{(2)}$ and maps $D^{(2)}$ to $D^{(1)}$ when
\begin{equation}
\frac{1}{\rho}\cdot e^{\frac{c_3}{\delta^2}}\epsilon \cdot \exp\left(2e^{\frac{c_3}{\delta^2}}\epsilon+\big(s-\rho/2 + 2e^{\frac{c_3}{\delta^2}}\epsilon\big)\cdot (s-\rho/4)^{-1} \right)<1/2. \label{condition3}
\end{equation}

Then $\mathfrak{U}^{-1}\mathfrak{M}\mathfrak{U}$ is well defined in $D(r^+,s+)$, and in this case there is the estimate
 \begin{equation*}
 \|u'\|_{r-\delta/2,s-\rho/2}+\|v'\|_{r-\delta/2,s-\rho/2} \leq 2e^{\frac{c_3}{\delta^2}}\cdot \epsilon.
 \end{equation*}
And there is a rough estimate about $\|f_+\|_{r^+,s^+}+\|g_+\|_{r^+,s^+}$,
if denote
\begin{equation*}
2\epsilon \cdot \exp\left(e^{\frac{c_3}{\delta^2}}\epsilon+\big(s^+ + e^{\frac{c_3}{\delta^2}}\epsilon\big)/s\right) + 2e^{\frac{c_3}{\delta^2}}\cdot \epsilon\\
\end{equation*}
by $\Delta$, that is,
\begin{equation}
	\|f_+\|_{r^+,s^+}+\|g_+\|_{r^+,s^+}  \leq \Delta+ 2e^{\frac{c_3}{\delta^2}}\epsilon\cdot \exp\left(\Delta+ \frac{2s^+ +\Delta}{s-\rho/2}\right).
	\label{rough estimate}
\end{equation}
For simplicity, denote the right side of (\ref{rough estimate}) by $\widetilde{\Delta}$.
In the next subsection, a more sharp estimate of the new perturbations will be given.

\subsection{Estimates on the new perturbations}

Because $u$ and $v$ are solutions to the linear equation (\ref{linear eq2}), the right side of the equation
\begin{equation*}
\left\{
\begin{aligned}
&f_+(\xi,\eta) = f(\xi+u,\eta+v)+v(\xi,\eta)+u(\xi,\eta)-u(\xi+\eta+f_+,\eta+g_+),\\[0.2cm]
&g_+(\xi,\eta) = g(\xi+u,\eta+v)+v(\xi,\eta)-v(\xi+\eta+f_+,\eta+g_+),
\end{aligned}
\right.
\end{equation*}
can be transformed to
\begin{equation*}
\left\{
\begin{aligned}
f_+(\xi,\eta) =& (f(\xi+u,\eta+v)-f(\xi,\eta))\\[0.2cm]
&-(u(\xi+\eta+f_+,\eta+g_+)-u(\xi+\alpha,\eta)),\\
\\
g_+(\xi,\eta) =& (g(\xi+u,\eta+v)-g(\xi,\eta)+[g](\eta))\\[0.2cm]
&-(v(\xi+\eta+f_+,\eta+g_+)-v(\xi+\alpha,\eta)).
\end{aligned}
\right.
\end{equation*}
Moreover, by the mean value theorem, it is transformed to
\begin{equation*}
\left\{
\begin{aligned}
(1+u_{\xi})f_+ =& (f(\xi+u,\eta+v)-f(\xi,\eta))\\[0.2cm]
&-u_{\xi}\cdot(\eta-\alpha)-u_{\eta}\cdot g_+,\\
\\
(1+v_{\eta})g_+ =& (g(\xi+u,\eta+v)-g(\xi,\eta)+[g](\eta))\\[0.2cm]
&-v_{\xi}\cdot(\eta+f_+-\alpha).
\end{aligned}
\right.
\end{equation*}

In the following we will approximate the troublesome mean value $[g]$ by
\begin{equation*}
h(\eta)=g(\alpha)+[g]_{\eta}(\alpha)(\eta-\alpha).
\end{equation*}

By Cauchy's estimate and the property (ii) of Lemma \ref{lemma of function space}, one has the following estimates
\begin{equation*}
\begin{aligned}
\|f(\xi+u,\eta+v)-f(\xi,\eta)\|_{r^+,s^+} \leq &  \max\left\{\frac{1}{\rho},\frac{1}{\delta}\right\} \cdot e^{\frac{c_3}{\delta^2}}\cdot \epsilon^2\\
 &\times \exp \left(e^{\frac{c_3}{\delta^2}}\epsilon+\big(s^+ + e^{\frac{c_3}{\delta^2}\epsilon}\big)/s\right),
\end{aligned}
\end{equation*}
\begin{equation*}
\begin{aligned}
\|g(\xi+u,\eta+v)-g(\xi,\eta)\|_{r^+,s^+} \leq&   \max \left \{\frac{1}{\rho},\frac{1}{\delta}\right\}  \cdot e^{\frac{c_3}{\delta^2}}\cdot \epsilon^2 \\
& \times \exp \left(e^{\frac{c_3}{\delta^2}}\epsilon+\big(s^+ + e^{\frac{c_3}{\delta^2}\epsilon}\big)/s\right).\\
\end{aligned}
\end{equation*}

By the rough estimate on new perturbations (\ref{rough estimate}), we have
\begin{equation*}
\|u_{\xi}\cdot(\eta-\alpha)\|_{r^+,s^+} \leq
\exp\left(s^+ + \widetilde{\Delta}+ \frac{2s^+ + \widetilde{\Delta}}{s-\rho/4}\right)  \cdot e^{\frac{c_3}{\delta^2}}\epsilon  \cdot s^+ ,
\end{equation*}
\begin{equation*}
\|u_{\eta}\cdot g_+\|_{r^+,s^+} \leq
\exp\left(s^+ + \widetilde{\Delta}+ \frac{2s^+ + \widetilde{\Delta}}{s-\rho/4}\right) \cdot \frac{1}{\rho} \cdot e^{\frac{c_3}{\delta^2}}\epsilon  \cdot \widetilde{\Delta} ,
\end{equation*}
\begin{equation*}
\|v_{\xi}\cdot(\eta+f_+ -\alpha)\|_{r^+,s^+} \leq
\exp \left(s^+ + \widetilde{\Delta}+ \frac{2s^+ + \widetilde{\Delta}}{s-\rho/4}\right) \cdot e^{\frac{c_3}{\delta^2}}\epsilon  \cdot \left(s^+ + \widetilde{\Delta}\right) .
\end{equation*}
Moreover, we have
\begin{equation*}
\|[g]-h\|_{r^+,s^+} \leq \left(\frac{s^+}{s-s^+}\right)^2 \cdot \epsilon.
\end{equation*}
Finally we get
\begin{equation*}
\begin{aligned}
\|f_+\|_{r^+,s^+}+\|g_+ -h\|_{r^+,s^+} \leq& \exp\left(s^+ + \widetilde{\Delta}+ \frac{2s^+ + \widetilde{\Delta}}{s-\rho/4}\right)
 \cdot e^{\frac{c_3}{\delta^2}}\epsilon  \cdot \left(2s^+ + 4\widetilde{\Delta}/\rho\right) \\
 &+ \max\left \{\frac{1}{\rho},\frac{1}{\delta}\right\}  \cdot e^{\frac{c_3}{\delta^2}} \cdot \epsilon^2
  \cdot \exp \left(e^{\frac{c_3}{\delta^2}}\epsilon+\big(s^+ + e^{\frac{c_3}{\delta^2}}\epsilon\big)/s\right)\\
  &+\left(\frac{s^+}{s-s^+}\right)^2 \cdot \epsilon.
\end{aligned}
\end{equation*}

Assume the following conditions
\begin{equation}
\left \{
\begin{aligned}
&(a) \ s^+ < \frac{s}{3},\\
&(b)\ \rho < \delta,\\
&(c)\ e^{\frac{c_3}{\delta^2}}\epsilon < s \ll 1 ,\\
&(d)\ \epsilon + s^+ + \frac{\rho}{4} < \frac{\delta}{4},
\label{series condition}
\end{aligned}\right.
\end{equation}
are satisfied,  immediately we get
\begin{equation*}
\Delta \leq C e^{\frac{c_3}{\delta^2}} \cdot \epsilon,
\end{equation*}
\begin{equation*}
\widetilde{\Delta} \leq C' e^{\frac{c_3}{\delta^2}} \cdot \epsilon.
\end{equation*}
where $C$ and $C'$ are two positive constants.

Moreover
\begin{equation*}
\|f_+\|_{r^+,s^+}+\|g_+ -h\|_{r^+,s^+} \leq Q := c_4\left\{e^{\frac{2c_3}{\delta^2}}\cdot\left( \frac{\epsilon^2}{s}+s\epsilon\right)+\left(\frac{s^+}{s}\right)^2\epsilon \right\} .
\end{equation*}

The preliminary estimate for $h$ is insufficient. However, by the intersection property of $\mathfrak{M}$, the mapping $\mathfrak{M}^+:=\mathfrak{U}^{-1}\mathfrak{M}\mathfrak{U}$ also has the intersection property. Thus each curve $\eta=\eta_0$, where $\eta_0$ is constant, intersects with its image under $\mathfrak{M}^+$, thus there exists a point $(\xi_0,\eta_0) \in D(r,s)$ such that $g_+(\xi_0,\eta_0)=0$ for any real $\eta_0$ which satisfies $|\eta_0-\alpha| \leq s^+$. Then we have $\|h(\eta)\|_{r^+,s^+} \leq Q$. Specially, set $\eta=\alpha$, we get $|[g](\alpha)| \leq Q$, and set $\eta=\alpha+s^+$, we get \begin{equation*}\left|[g](\alpha)+[g]_{\alpha}(\alpha)s^+\right| \leq Q.\end{equation*}
Thus $\|h(\eta)\|_{r^+,s^+} = \sup_{|\eta-\alpha|<s^+} |h(\eta)| \leq 3Q$.

Finally we have
\begin{equation*}
\|f_+\|_{r^+,s^+}+\|g_+ \\|_{r^+,s^+} \leq 4Q \leq c_6\left\{e^{\frac{c_5}{\delta^2}}\cdot\left( \frac{\epsilon^2}{s}+s\epsilon\right)+\left(\frac{s^+}{s}\right)^2\epsilon\right\}.
\end{equation*}

\subsection{The iteration lemma}

To summarize the above discussions, the iteration lemma is given as follows.
\begin{lemma}
	Consider the mapping
	\begin{equation*}
	\mathfrak{M}:\left\{
	\begin{array}{lc}
	x_1 = x+y+f(x,y), &  \\
	& (x,y)\in\R\times[a,b],\\
	y_1 = y+g(x,y), &
	\end{array}
	\right.
	\end{equation*}
where $f$ and $g$ belong to $AP_{r,s}(\omega)$. Let
$$\epsilon = \|f\|_{r,s}+\|g\|_{r,s},$$
and $r^+<r<1$, $s^+<s<1$, $\delta= r-r^+$, $\rho = s-s^+$.
Assume $\epsilon$, $\delta$, $\rho$ satisfy (\ref{series condition}), $\delta$ is small enough to make sure (\ref{condition 1}) valid, and if $$e^{\frac{c_3}{\delta^2}}\epsilon < \frac{1}{4}\min\{\delta,\rho\},$$ moreover $\epsilon$ is small enough such that (\ref{condition3}) holds, then there exists a transformation
\begin{equation*}
\mathfrak{U}:\left \{
\begin{array}{lc}
x = \xi + u(\xi,\eta),&\\[0.2cm]
y =\eta + v(\xi,\eta),&
\end{array}
\right.
\end{equation*}
where $u$, $v$ belong to $AP_{r-\delta/4,s-\rho/4}(\omega)$ such that $\ \mathfrak{U}^{-1}\mathfrak{M}\mathfrak{U}\ $ takes the form
\begin{equation*}
\left \{
\begin{array}{lc}
\xi_1 = \xi + \eta + f_+(\xi,\eta),&\\[0.2cm]
\eta_1 =\eta + g_+(\xi,\eta),&
\end{array}
\right.
\end{equation*}
and the new mapping is defined in $D(r^+,s^+)$.
Moreover one has the following estimates
\begin{equation*}
\|\mathfrak{U}-id\|_{r^+,s^+} < e^{\frac{2c_3}{\delta^2}} \cdot \epsilon,
\end{equation*}
\begin{equation*}
\|\partial\mathfrak{U}-Id\|_{r^+,s^+} < \frac{1}{\rho} e^{\frac{2c_3}{\delta^2}} \cdot \epsilon,
\end{equation*}
\begin{equation*}
\|f_+\|_{r^+,s^+}+\|g_+ \|_{r^+,s^+}\leq c_6\left\{e^{\frac{c_5}{\delta^2}}\cdot\left( \frac{\epsilon^2}{s}+s\epsilon\right)+\left(\frac{s^+}{s}\right)^2\epsilon \right\}.
\end{equation*}
\label{KAM step}
\end{lemma}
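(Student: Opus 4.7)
My plan is to realize the KAM step already outlined in Sections 3.1--3.3, organized into four stages. In the first stage, I would solve the modified homological equations (\ref{linear eq2}): apply Lemma \ref{lemma hol} to the second equation to obtain a zero-mean solution $\widetilde{v}$, then, since solvability of the first equation requires $v+f$ to have zero $\xi$-mean, set $v := \widetilde{v} - [f](\eta)$ and invoke Lemma \ref{lemma hol} a second time to get $u$. Combined with the trivial bound $\|[f]\|_{r,s} \le \|f\|_{r,s} \le \epsilon$, this yields $\|u\|_{r-\delta/8,s} + \|v\|_{r-\delta/8,s} \le e^{c_2/\delta^2}\epsilon$, and Cauchy's estimate in $D^{(1)}$ delivers the derivative bounds (\ref{condition 1}).

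Next I would verify that $\mathfrak{U}^{-1}\mathfrak{M}\mathfrak{U}$ is well defined on $D(r^+, s^+)$. Property (iii) of Lemma \ref{lemma of function space}, together with (\ref{condition3}), inverts $\mathfrak{U}$, and the smallness assumption on $\epsilon$ forces the chain of inclusions $\mathfrak{U}(D(r^+,s^+)) \subset D^{(3)}$, $\mathfrak{M}(D^{(3)}) \subset D^{(2)}$, and $\mathfrak{U}^{-1}(D^{(2)}) \subset D^{(1)}$ from the explicit bounds produced in the first stage. This stage also produces the crude estimate (\ref{rough estimate}) on $\|f_+\|_{r^+,s^+} + \|g_+\|_{r^+,s^+}$, which is needed when bounding the composition terms in the next stage.

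The third step is to extract a quadratically small error. Because $u,v$ solve (\ref{linear eq2}) exactly, the expressions for $f_+, g_+$ rearrange so that each remaining term appears as a difference of the form $f(\xi+u,\eta+v) - f(\xi,\eta)$ or $u(\xi+\eta+f_+,\eta+g_+) - u(\xi+\alpha,\eta)$. By the mean value theorem each such difference equals (a derivative of size $e^{c/\delta^2}\epsilon/\rho$) times (an increment of size $\epsilon$ or $\widetilde{\Delta}$), and hence is quadratic in $\epsilon$ modulo the standard inverse powers of $\delta$ and $\rho$. The one leftover that is only linearly small is the mean value $[g](\eta)$ appearing in the second homological equation; I would replace it by its affine Taylor polynomial $h(\eta) = [g](\alpha) + [g]_\eta(\alpha)(\eta-\alpha)$, whose Cauchy-bounded remainder contributes at most $(s^+/s)^2\epsilon$.

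The main obstacle --- and the sole reason the intersection hypothesis is imposed --- is bounding $h$ itself, which a priori has size only $\epsilon$. Here I would observe that $\mathfrak{M}^+ := \mathfrak{U}^{-1}\mathfrak{M}\mathfrak{U}$ inherits the intersection property from $\mathfrak{M}$, so every real horizontal line $\eta = \eta_0$ with $|\eta_0 - \alpha| \le s^+$ must meet its image under $\mathfrak{M}^+$, forcing $g_+(\xi_0,\eta_0) = 0$ at some real $\xi_0$. Evaluating the identity $g_+ = h + (g_+ - h)$ at $\eta_0 = \alpha$ and at $\eta_0 = \alpha + s^+$ bounds $|h(\alpha)|$ and $|h(\alpha) + [g]_\eta(\alpha) s^+|$ separately by $Q$; since $h$ is affine in $\eta$, this pins down both coefficients and yields $\|h\|_{r^+,s^+} \le 3Q$. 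Substituting back into the quadratic estimate and invoking (\ref{series condition}) produces the advertised bound $c_6\{e^{c_5/\delta^2}(\epsilon^2/s + s\epsilon) + (s^+/s)^2\epsilon\}$; the bounds on $\|\mathfrak{U}-\mathrm{id}\|_{r^+,s^+}$ and $\|\partial\mathfrak{U} - \mathrm{Id}\|_{r^+,s^+}$ then follow immediately from the first stage.
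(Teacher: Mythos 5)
Your proposal mirrors the paper's own argument in Sections 3.1--3.3 almost point for point: the same splitting $v=\widetilde{v}-[f]$, two applications of Lemma~\ref{lemma hol}, the same chain of domain inclusions secured by (\ref{condition 1}) and (\ref{condition3}), the same quadratic rearrangement via the mean value theorem, the same affine approximant $h(\eta)=[g](\alpha)+[g]_\eta(\alpha)(\eta-\alpha)$, and the same use of the intersection property evaluated at $\eta=\alpha$ and $\eta=\alpha+s^+$ to pin down the two coefficients of $h$. This is essentially the paper's proof, correctly reconstructed.
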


\section{Iteration and proof of the main theorem}

Lemma \ref{KAM step}, known as the iteration lemma in the KAM type proof, is the base of the main theorem. One can use it for infinite times to construct a sequence of transformations and finally eliminate the perturbations. This idea can be dated back to the works of Kolmogorov \cite{Kolmogorov}, Arnold \cite{VIA} and Moser \cite{Moser}, which give this method a name ``KAM". Obviously, it is necessary to make sure that the conditions of Lemma \ref{KAM step} are satisfied each time, and note that the domain will degenerate in the limit case.

Denote the initial $\mathfrak{M}$ by $\mathfrak{M}_0$ and the related domain is
$$D_0:\ |\Im~x|<r_0, \ |y-\alpha|<s_0.$$
By the assumption,
$$\|f\|_{r_0,s_0}+\|g\|_{r_0,s_0} < \epsilon_0.$$

Now we choose the sequences of $r_n$, $s_n$ and $\epsilon_n$ for $n \geq 0$.
Define $\delta_n=\frac{3r_0}{\pi^2}\cdot\frac{1}{n^2},$ thus $\sum_{n=1}^{\infty} \delta_n = r_0/2$, and let $r_n = r_0 - \sum_{k=1}^{n} \delta_k,$ when $n=0$ the sum is 0. Set $s_n = (\epsilon_n)^{2/3}$, $\epsilon_{n+1} = c_7^{(n+1)^4} \cdot \epsilon_n^{4/3}$,
 and
 \begin{equation*}
 e_n=c_7^{6n^4+84n^3+882n^2+6132n+21315}\cdot \epsilon_n,
 \end{equation*}
 then
 \begin{equation*}
 e_{n+1} = c_7^{7(n+1)^4+84(n+1)^3+882(n+1)^2+6132(n+1)+21315}\epsilon_n^{4/3},
 \end{equation*}
and $e_{n+1} \leq e_n^{4/3}$.

Assume that $e_0<1$, since $e_n$ decrease sup-exponentially, then
\begin{equation*}
\left(\frac{s_{n+1}}{s_n}\right) ^2 \cdot \epsilon_n= \left(\frac{\epsilon_{n+1}}{\epsilon_n}\right)^{1/3} \cdot  \epsilon_{n+1} \ll c_7^{-1/3} \cdot  \epsilon_{n+1},
\end{equation*}
and
\begin{equation*}
\begin{aligned}
\|f_+\|_{r^{n+1},s^{n+1}}+\|g_+ \|_{r^{n+1},s^{n+1}} \leq& c_6\left\{e^{c_5\frac{\pi^4}{9r_0^2}(n+1)^4}\cdot\left( \frac{\epsilon_n^2}{s_n}+s_n\epsilon_n\right)+\left(\frac{s_{n+1}}{s_n}\right)^2\cdot \epsilon_n\right\} \\
\leq& c_6\left\{e^{c_5\frac{\pi^4}{9r_0^2}(n+1)^4}\cdot\big(1+\epsilon_n^{1/3}\big)\cdot c_7^{-(n+1)^4}+c_7^{-1/3}\right\}\\
&\times \epsilon_{n+1}.
\end{aligned}
\end{equation*}
By taking $c_7$ large, the coefficient of $\epsilon_{n+1}$ can be made less than 1. Furthermore, one can choose $\epsilon_0$ small enough such that $s_{n+1} < s_n /3$, $\rho_n = s_n- s_{n+1}< \delta_n$ and $\epsilon_n<\rho_n/4$ and $e^{\frac{c_3}{\delta_n^2}}\cdot \epsilon_n < s_n$.
Thus the conditions for the iteration lemma can be satisfied for each step.

Now denote $\mathfrak{V}_n$ the composition of the transformations $\mathfrak{U}_0\circ\mathfrak{U}_1\circ\cdots\circ\mathfrak{U}_n$, then $\mathfrak{M}_{n+1}= \mathfrak{V}^{-1}_n\mathfrak{M}_0\mathfrak{V}_n$. Assume $\mathfrak{V}_n$ takes the form
\begin{equation*}
\left\{
\begin{aligned}
x=\xi+p_n(\xi,\eta) , \\[0.2cm]
y=\eta+q_n(\xi,\eta),
\end{aligned}\right.
\end{equation*}
where $p$, $q\in AP_{r_{n+1},s_{n+1}}(\omega)$, then we have
\begin{equation*}
\left\{
\begin{aligned}
p_n(\xi,\eta) &= u_n(\xi,\eta)+p_{n-1}(\xi+u_n,\eta+v_n),\\[0.2cm] 
q_n(\xi,\eta) &= v_n(\xi,\eta)+q_{n-1}(\xi+u_n,\eta+v_n).
\end{aligned}\right.
\end{equation*}

Note that $s_n \rightarrow 0$ and $r_n \rightarrow \frac{r_0}{2}$, denote $D_{\infty}$ the degenerated domain $|\Im~\xi|<\frac{r_0}{2},\eta=\alpha$, and denote
\begin{equation*}
\Delta_n = \|p_{n}(\xi,\eta)\|_{r_{n+1},s_{n+1}}+\|q_n(\xi,\eta)\|_{r_{n+1},s_{n+1}}.
\end{equation*}
Then by the induction, using the estimates in the iteration lemma we have
\begin{equation*}
\Delta_n \leq \Delta_{n-1} \left(1+\frac{\epsilon_0^{1/9}}{3^n}\right
) + \frac{\epsilon_0^{1/9}}{3^n}.
\end{equation*}
Hence $p_n(\xi,\alpha)$, $q_n(\xi,\alpha)$ converge with respect to the $\|\cdot\|_{r_0/2}$ norm, thus converge uniformly in $D_{\infty}$.
Denote the limit of $p_n$ by $u(\xi)$ and the limit of $q_n$ by $v(\xi)-\alpha$, then
\begin{equation*}
\|u\|_{r_0/2}+\|v-\alpha\|_{r_0/2} \leq 4\epsilon_0^{1/9}.
\end{equation*}

To verify $u$, $v$ are the desired functions, taking the limit on both sides of
$\mathfrak{V}_n\mathfrak{M}_{n+1}=\mathfrak{M}_0\mathfrak{V}_n$
yields that
\begin{equation*}
\left\{
\begin{aligned}
&\xi+u(\xi)+v(\xi)+f(\xi+u(\xi),v(\xi)) = \xi + \alpha + u(\xi+\alpha),\\[0.2cm]
&v(\xi) + g(\xi+u(\xi),v(\xi)) = v(\xi+\alpha).
\end{aligned}\right.
\end{equation*}
Thus the curve $x=\xi+u(\xi)$, $y=v(\xi)$ is invariant under $\mathfrak{M}_0$ and the mapping restricted on this curve is given by the rotation $\xi_1=\xi+\alpha$. The proof of the main theorem is accomplished.

\section{Application}

In this section, as an application, we consider the following pendulum-type equation
\begin{equation}
\ddot{x} + G_x(t,x) = p(t), \label{pendul}
\end{equation}
where $G\in C^{\infty}(\R\times\mathbb{T})$ is $2\pi$ periodic in $x$, and for each fixed $x$, $G(\cdot,x) \in AP(\omega)$, $p\in AP(\omega)$ is a real analytic almost periodic function, and $\omega$ is the Diophantine frequency vector satisfying (\ref{Dio}).

In the periodic case, You \cite{you} had proved that the boundedness of all solutions and the existence of quasi-periodic solutions if and only if the mean value of $p$ is 0. In the quasi-periodic case, Cong, Liang and Han \cite{cong1} also had proved the same conclusion.

Equation (\ref{pendul}) is equivalent to the system

\begin{equation*}
\left\{
\begin{aligned}
&\dot{x}=y ,\\[0.2cm]
 &\dot{y}= -G_x(t,x) + p(t),
 \end{aligned}\right.
\end{equation*}
where the Hamiltonian takes the form $H(x,y)=\frac{y^2}{2}+G(t,x)-xp(t)$.

\subsection{The coordination transformation}
Firstly, under the following coordination transformation
\begin{equation*}
\left\{
\begin{aligned}
&u=\frac{\partial S}{\partial v}=x,\\[0.2cm]
&y=\frac{\partial S}{\partial x}=v+\int_{0}^{t} p(s) ds,
\end{aligned}\right.
\end{equation*}
where $S(x,v,t)=xv+x\int_{0}^{t} p(s)ds $, the system becomes to
\begin{equation*}
\left\{
\begin{aligned}
&\dot{u}= v+P(t),\\[0.2cm]
&\dot{v}=-G_{x}(t,u),
\end{aligned}\right.
\end{equation*}
where $$P(t) = \int_{0}^{t} p(s) ds +C,$$ $C$ will be determined later, and the Hamiltonian is $$h= H\circ\Phi +\frac{\partial S}{\partial t} = \frac{(v+P(t))^2}{2}+G(t,u).$$

Because $p(t) \in AP(\omega)$, assume that the average $ \lim_{T \rightarrow \infty} \frac{1}{T}\int_{0}^{T} p(t) dt = 0,$ and by the Diophantine condition of $\omega$, we have $P(t) \in AP(\omega)$. Choose $C$ such that the average $\lim_{T \rightarrow \infty} \frac{1}{T} \int_{0}^{T} P(s) ds=0,$ that is,
$$
C=\lim_{T \rightarrow \infty} \frac{1}{T} \int_{0}^{T} \int_{0}^{s} p(\tau) d\tau ds,
$$
then the integral $\int_{0}^{t} P(s) ds$ is also almost periodic in t and belongs to $AP(\omega)$.

Now the roles of angle $u$ and time $t$ is changed when $h$ is large sufficiently, then the system is transformed into
\begin{equation*}
\left\{
\begin{aligned}
&\frac{dt}{du}= 2^{-1/2} (h-G(t,u))^{-1/2} ,\\[0.2cm]
&\frac{dh}{du}= p(t)+2^{-1/2} (h-G(t,u))^{-1/2} G_{t}(t,u).
\end{aligned}\right.
\end{equation*}

Define the generator function $$T(\rho,t)= \rho t + 2^{1/2}\rho^{1/2} \int_{0}^{t} P(s) ds,$$ and the corresponding transformation is
\begin{equation*}
\left\{
\begin{aligned}
&\theta = t + 2^{-1/2}\rho^{-1/2} \int_{0}^{t} P(s) ds ,\\[0.2cm]
&h = \rho +2^{1/2}\rho^{1/2}P(t).
\end{aligned}\right.
\end{equation*}

Under this transformation, the system becomes to
\begin{equation*}
\left\{
\begin{aligned}
&\frac{d\theta}{du}= 2^{-1/2}(1+2^{-1/2}\rho^{-1/2}P(t))\cdot(\rho+2^{1/2}\rho^{1/2}P(t)-G(t,u))^{-1/2}+M_2(t,\rho,u) ,\\[0.2cm]
&\frac{d\rho}{du}= (1+2^{-1/2}\rho^{-1/2}P(t))^{-1}\cdot M_1(t,\rho,u),
\end{aligned}\right.
\end{equation*}
where
\begin{equation*}
\begin{aligned}
 M_1=&\frac{1}{2}p(t)\int_{0}^{1} (1+s\rho^{-1}(2^{1/2}\rho^{1/2}P(t)-G(t,u)))^{-3/2}ds\\[0.2cm] &\times\rho^{-1}(2^{1/2}\rho^{1/2}P(t)-G(t,u)) \\[0.2cm] &+
 2^{-1/2}(\rho+2^{1/2}\rho^{1/2}P(t)-G(t,u))^{-1/2}G_t(t,u),
\end{aligned}
\end{equation*}
and $$M_2 = -2^{-3/2}\rho^{-3/2}\int_{0}^{t} P(s)ds \cdot \frac{d\rho}{du}.$$

From Lemma \ref{lemma of function space}, when $\rho$ is large enough, $t= \theta + q(\theta)$ for some $q(\theta) \in AP(\omega)$. For simplicity, by the straight calculation, one can denote the system by
\begin{equation*}
\left\{
\begin{aligned}
&\frac{d\theta}{du}= 2^{-1/2}\rho^{-1/2}+O(\rho^{-3/2}) ,\\[0.2cm]
&\frac{d\rho}{du}= O(\rho^{-1/2}).
\end{aligned}\right.
\end{equation*}
Note that the system is period in $u$ and almost periodic in $\theta$, and the size estimate $O(\cdot)$ is in the sense of the norm $\sup_{u \in [0,2\pi]}\|F(u,\cdot,\cdot)\|_{r,s}$ for some positive $r$, $s$, thus in the sense of the sup-norm.

\subsection{The Poincar\'{e} map}
By the contraction principle, if $\rho$ is large enough, then $\theta$, $\rho$ exist for $u \in [0,2\pi]$ and the Poincar\'{e} map can be written as
\begin{equation}
\left\{
\begin{aligned}
&\theta_1= \theta+2^{-1/2}\rho^{-1/2}+O(\rho^{-3/2}) ,\\[0.2cm]
&\rho_1= \rho+O(\rho^{-1/2}).
\end{aligned}\right. \label{MAP}
\end{equation}
The perturbations is almost periodic in $\theta$, more specifically speaking, the perturbation belongs to $AP(\omega)_{r,s}$ for some positive $r$, $s$.

Let $\delta \mu = 2^{-1/2} \rho^{-1/2}$, $\mu \in [a,b]$, clearly $\rho$ tends to $\infty$ if and only if $\delta$ tends to zero. Under this transformation, the mapping becomes to

\begin{equation}
\left\{
\begin{aligned}
&\theta_1= \theta+\delta\mu+O(\delta^3\mu^3) ,\\[0.2cm]
&\mu_1= \mu +O(\delta^3\mu^4).
\end{aligned}\right.\label{MAP'}
\end{equation}

To check the intersection property of the mapping (\ref{MAP'}), as the intersection property is preserved under coordination transformations, it is sufficient to verify the exact symplectic condition for (\ref{MAP}). On one hand, it is symplectic because it is the time-one map of a Hamiltonian system. Therefore it is sufficient to verify the exact condition
$$\lim_{T \rightarrow \infty} \frac{1}{2T} \int_{-T}^{T} \rho_1d\theta_1 - \rho d\theta = 0.$$
It is known that the limit above is independent of $\rho$ in the domain, thus the integral tends to zero if $\rho$ tend to $\infty$. Up to now, the intersection property is verified.

On the other hand, if the mean value of $p$ is not zero, consider the transformation
\begin{equation*}
\left\{\begin{aligned}
&u= x + v^{-2} \int_{0}^{x}G(t,s)ds,\\[0.2cm]
&y=v-v^{-1}G(t,x),
\end{aligned}\right.
\end{equation*}
where $u$, $x$ belong to $\mathbb{T}$. Under this transformation, the system becomes to

\begin{equation*}
\left\{
\begin{aligned}
&u'= u + O(v^{-2}),\\[0.2cm]
&v'=p(t) + O(v^{-1}).
\end{aligned}\right.
\end{equation*}

Without loss of generality, assume $$p^*=\lim_{T \rightarrow \infty} \frac{1}{T} \int_{0}^{T} p(s) ds > 0.$$
Therefore when $v_0$ is large enough, the solution with $u(0) = u_0$, $v(0)= v_0$ satisfies
\begin{equation*}
\begin{aligned}
v(t) &\geq v_0 + \frac{1}{2} p^* t + \int_{0}^{t} (p(s) -p^*)ds.
\end{aligned}
\end{equation*}
Since $\int_{0}^{t} (p(s) -p^*)ds $ is almost periodic, the solution is unbounded.

\subsection{The main result}

\begin{theorem}
Assume that $G\in C^{\infty}(\R\times\mathbb{T})$ is $2\pi$ periodic in $x$, and for each fixed $x$, $G(\cdot,x) \in AP(\omega)$, $p\in AP(\omega)$, where $\omega$ is the Diophantine frequency vector satisfying (\ref{Dio}), then all solutions of (\ref{pendul}) are bounded and there are infinitely many almost periodic solutions if and only if the average $\lim_{T \rightarrow \infty} \frac{1}{T} \int_{0}^{T} p(s) ds = 0$.\label{pend thm}
\end{theorem}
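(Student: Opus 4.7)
The strategy is to reduce the theorem to an application of the small twist Corollary \ref{small twist} to the Poincaré map (\ref{MAP'}) already derived in the excerpt. The necessity part (``only if'') is essentially complete in the excerpt: if $p^{\ast}\neq 0$ then the transformation with generating function $u=x+v^{-2}\int_0^xG(t,s)\,ds$ yields a system whose $v$-component grows linearly in $t$, contradicting boundedness. So the substantive work lies in the sufficiency direction, where we must invoke Corollary \ref{small twist} and then convert the resulting invariant curves into boundedness and into almost periodic solutions.

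First, assuming $\lim_{T\to\infty}T^{-1}\int_0^T p(s)\,ds=0$, I would verify that the small twist corollary applies to (\ref{MAP'}) for every small $\delta>0$. Three checks are needed. (a) \emph{Smallness of the perturbation in the weighted norm}: because $G(\cdot,x)$, $p$ and the primitive $P$ all lie in $AP(\omega)$, Lemma \ref{lemma of function space} shows that the symbols $O(\delta^3\mu^3)$ and $O(\delta^3\mu^4)$ in (\ref{MAP'}) are elements of $AP_{r,s}(\omega)$ whose $\|\cdot\|_{r,s}$ norms are bounded by $C\delta^3$ on $[a,b]$, hence by $\delta\cdot\epsilon$ with $\epsilon$ arbitrarily small. (b) \emph{Diophantine $\alpha$}: choose $\alpha\in[a+\gamma,b-\gamma]$ satisfying the condition displayed in Corollary \ref{small twist}; such $\alpha$ exist by the measure estimate of Proposition 2.6 applied with $\delta\alpha$ in place of $\alpha$, using the scaling freedom in (\ref{Dio2}). (c) \emph{Intersection property}: this is precisely what the excerpt verifies for (\ref{MAP}) through the exact symplectic condition and the fact that $\lim_{T\to\infty}(2T)^{-1}\int_{-T}^{T}(\rho_1\,d\theta_1-\rho\,d\theta)\to 0$ as $\rho\to\infty$; the change of variables $\delta\mu=2^{-1/2}\rho^{-1/2}$ preserves this property.

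Corollary \ref{small twist} then furnishes, for every sufficiently small $\delta>0$, an invariant curve $\Gamma_\delta:\theta=\xi+\phi(\xi,\delta)$, $\mu=\psi(\xi,\delta)$ of the Poincaré map in the $u$-variable, on which the dynamics is the rotation $\xi_1=\xi+\delta\alpha$. Translating back through the chain of symplectic transformations, each $\Gamma_\delta$ becomes an invariant curve of the original Poincaré time-$2\pi$ map in the $(t,h)$ cylinder at arbitrarily high energy $h\sim 1/\delta^2$. Since the Poincaré map lives on the cylinder $\mathbb{T}\times\mathbb{R}$ and the $\Gamma_\delta$ accumulate at $h=+\infty$, each $\Gamma_\delta$ separates the cylinder into two pieces; as $\delta\to 0$ we obtain an unbounded sequence of such curves. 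Any orbit of the Poincaré map starts below some $\Gamma_\delta$ and, by invariance, stays below it for all iterates, which gives a uniform bound on $y(t)$ (and hence on $x(t)$ modulo the oscillation of the original primitive transformation). Unwinding all of the coordinate changes yields boundedness of every solution of (\ref{pendul}).

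Finally, the existence of infinitely many almost periodic solutions comes from the dynamics on each $\Gamma_\delta$: the iterates satisfy $\xi_n=\xi_0+n\delta\alpha$, so the sequence $(\theta_n,\mu_n)$ is almost periodic in $n$; interpolating via the flow of the intermediate Hamiltonian systems, and using Corollary 2.6 (module containment for compositions), the resulting solution $x(t)$ of (\ref{pendul}) is almost periodic with frequency module contained in $\mathcal{M}(G,p)+\mathbb{Z}\delta\alpha$. Distinct $\delta$'s give non-conjugate curves and thus infinitely many almost periodic solutions. The main obstacle I anticipate is step (a), namely converting the formal $O(\cdot)$ symbols that are written in sup-norm after the generating-function change of variables into genuine $\|\cdot\|_{r,s}$ estimates; this requires controlling each composition (notably $t=\theta+q(\theta)$ and the fractional powers $(\rho+\cdots)^{-1/2}$) through the calculus of Lemma \ref{lemma of function space}, and is the one place where the quantitative exponential decay built into Definition \ref{func sp2} is really used.
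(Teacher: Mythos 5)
Your proposal takes essentially the same approach as the paper: apply Corollary \ref{small twist} to the Poincar\'e map (\ref{MAP'}) after checking perturbation smallness, the Diophantine choice of $\alpha$, and the intersection property via exact symplecticity of (\ref{MAP}) as $\rho\to\infty$, then deduce boundedness from the invariant curves and almost periodicity from the rotation $\xi_1=\xi+\delta\alpha$ on them, with necessity handled by the unboundedness computation when $p^{\ast}\neq 0$; the paper's own proof of Theorem \ref{pend thm} is a two-sentence reduction to precisely this chain and you are simply filling in the details it leaves to Section 5.1--5.2. One small inaccuracy: the added frequency of the resulting almost periodic solutions is $1/(\delta\alpha)$, as stated in the paper's remark following the theorem, rather than $\delta\alpha$.
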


\begin{proof}
	If the mean value of $p$ is zero, for small enough $\delta$, the small twist theorem \ref{small twist} guarantees the existence of infinitely many invariant curves for the mapping  (\ref{MAP'}) and thus for the mapping (\ref{MAP}). Therefore the system (\ref{pendul}) has infinitely many almost periodic solutions, and all solutions are bounded. Conversely if the mean value of $p$ is not zero, the discussion above shows that the solution with big enough initial value $y_0$ is unbounded.
\end{proof}

\begin{remark}
	It follows from the proof of Theorem \ref{pend thm} that if the conditions of the theorem hold, then system (\ref{pendul}) has infinitely many almost periodic solutions with frequencies $\{\omega,\frac{1}{\delta\alpha}\}$ satisfying
	$$\left|(l,\omega)\frac{\delta\alpha}{2\pi}-n\right|>\prod_{i=1}^{\infty}\frac{\gamma}{1+|l_i|^{2+2\mu}\cdot i^{2+2\mu}}, ~ \alpha \in [a+\gamma,b-\gamma]$$
	 for any $n \in \Z$, $l \in Z^{\N^+}_{\star}$ and fixed positive $\gamma$, $\delta$.
\end{remark}

\section{Appendix}\label{appen}
\subsection{The proof of Proposition \ref{modified module containment}} \label{1}

We prove the sufficiency firstly. From Lemma \ref{translation2},
	for all $ \epsilon>0$, there exists a finite set $$\{\lambda_n:n=1,...,N\}\subseteq\mathcal{M}(h)\subseteq\mathcal{M}(f,g),$$ and a $\delta>0 $, such that the set $$S=\{\tau:|\lambda_n\tau|<\delta \mod(2\pi),n=1,...,N\}$$ belongs to $T_{\epsilon}(h)$.
	
Because $\{\lambda_n\}$ are finite linear combinations of exponents of $f$ and $g$,  from Lemma \ref{translation1}, there exists a $\delta'>0$ such that
	 $$T_{\delta'}(f)\bigcap T_{\delta'}(g)\subseteq S.$$
	  Note that this set is not empty. If $\{f(t+t_n)\}$ and $\{g(t+t_n)\}$ are Cauchy sequences in uniform topology, there exists $N$ such that for all $ n,m\geq N$ and $t\in\R$, we have
\begin{equation*}
\left\{\begin{aligned}
&|f(t+tn-tm)-f(t)|<\delta', \\[0.2cm]
&|g(t+tn-tm)-g(t)|<\delta'.
\end{aligned}\right.
\end{equation*}
Therefore we have $|h(t+tn-tm)-h(t)|<\epsilon$ for all $ n,m\geq N$, thus $\{h(t+t_n)\}$ converges uniformly.
	
	The necessity is proved by contradiction. Assume that there is an exponent of $h$, denoted by $\lambda_0$, which does not belong to $\mathcal{M}(f,g)$, then there are two situations:\\[0.2cm]
	(i) for all $ l\geq1$, we have $l\cdot\lambda_0 \notin \mathcal{M}(f,g)$;\\[0.2cm]
	(ii) there exists $ l_0 \geq 2$ such that $l_0 \cdot \lambda_0 \in \mathcal{M}(f,g)$.

	Firstly, from Lemma \ref{translation1} and Lemma \ref{translation2}, it is clear that for all $\epsilon>0$, there exists a $ \delta>0$ such that $$T_{\delta}(f)\bigcap T_{\delta}(g)\subseteq T_{\epsilon}(h).$$
	 Then for all $ \epsilon>0$, there exists $ \{\lambda_n:n=1,...,N\}\subseteq \mathcal{M}(f,g)$ and a $\delta>0$ such that $$\{\tau:|\lambda_n\tau|<\delta \mod(2\pi),n=1,...,N\} \subseteq \{\tau:|\lambda_0\tau|<\epsilon \mod (2\pi)\}.$$

In the first situation, set $\theta_n=0,n=1,...,N$ and $\theta_0=\pi$. It is clear that if $\{k_n\in \Z\}$ satisfies that $\sum_{n=0}^{N}k_n\lambda_n=0$, then we have $k_n\equiv0$ for $n=0,...,N$. Thus $\sum_{n=0}^{N}k_n\theta_n=0$. From Kronecker Theorem \ref{kronecker}, there exists a $\tau$ such that both $|\lambda_n\tau|<\delta \mod(2\pi)$ and $|\lambda_0\tau-\pi|<\delta \mod (2\pi)$ are valid for $n=1, ... ,N$. Then we have $|\lambda_0\tau-\pi|<\delta$ and $|\lambda_0\tau|<\epsilon$ at the same time. Set $\epsilon$ and $\delta$ small enough, then such $\tau$ does not exist, this makes a contradiction.

In the second situation, it suffices to set $\theta_0=\frac{2\pi}{l_0}$. If $\sum_{n=0}^{N}k_n\lambda_n=0$, then $l_0|k_0$ and  $$\sum_{n=0}^{N}k_n\theta_n=0 \mod(2\pi).$$ Similar to the first situation, by kronecker theorem, this makes a contradiction.

\subsection{The proof of Proposition \ref{Dio}}\label{2}
	Consider the product space $\Omega=[0,1]^{\N^+}$ equipped with product probability measure $m$. Denote $\D_{\gamma_0}$ the  subset of $\{\omega\} \subseteq \Omega$ satisfying (\ref{Dio}), and denote $|\mbox{supp} ~l|$ the number of elements of $\{i: l_i \neq 0\}$. Denote $c_n$ the volume of n-dimensional unit ball, then $c_n \leq C$, where $C$ is an absolutes constant. Then we have
	\begin{equation*}
\begin{aligned}
m(\Omega-\D_{\gamma_0}) \leq \ & C\sum_{l\in Z^{\N^+}_{\star}} \frac{\gamma_0\cdot\sqrt{2}^{|\mbox{supp}\, l| -1}}{\prod_{i=1}^{\infty}(1+|l_i|^{1+\mu}\cdot i^{1+\mu})}\\
\leq \ & C \cdot \gamma_0 \cdot \prod_{i=1}^{\infty}\left( \sum_{l_i=-\infty \atop l_i \neq 0}^{\infty} \frac{\sqrt{2}}{1+i^{1+\mu}\cdot |l_i|^{1+\mu}}+1\right) \\
\leq \ & C \cdot \gamma_0 \cdot \prod_{i=1}^{\infty}\left(1+\frac{\sqrt{2}C(\mu)}{i^{1+\mu}}\right)\\
= \ & O(\gamma_0)
\end{aligned}
\end{equation*}
	When $\gamma_0$ tends to zero, $m(D_{\gamma_0})$ tends to 1, thus such $\omega$ exists.

\subsection{The proof of Proposition \ref{Dio2}} \label{3}

	Denote $\mathcal{R}_{\omega}^{l,n}$ the set $$\left\{\alpha \in [a+2\pi\gamma,b-2\pi\gamma]:\left|(\omega,l)\frac{\alpha}{2\pi}-n\right|\leq \gamma \prod_{i=1}^{\infty} \frac{1}{1+|l_i|^{2+2\mu}\cdot i^{2+2\mu}}\right\},$$ i.e. 	when $(\omega,l)>0$,
	
	\begin{equation*}
	\left\{
	\begin{aligned}
	&\frac{2\pi n}{|(\omega,l)|}-\frac{2\pi\gamma}{|(\omega,l)|} \prod_{i=1}^{\infty} \frac{1}{1+|l_i|^{2+2\mu}\cdot i^{2+2\mu}}  \leq \alpha ,\\  & \frac{2\pi n}{|(\omega,l)|} + \frac{2\pi\gamma}{|(\omega,l)|} \prod_{i=1}^{\infty} \frac{1}{1+|l_i|^{2+2\mu} \cdot i^{2+2\mu}} \geq \alpha,
	\end{aligned}\right.
	\end{equation*}
when $(\omega,l)<0$,
	\begin{equation*}
	\left\{
	\begin{aligned}
	&\frac{-2\pi n}{|(\omega,l)|}-\frac{2\pi\gamma}{|(\omega,l)|} \prod_{i=1}^{\infty} \frac{1}{1+|l_i|^{2+2\mu}\cdot i^{2+2\mu}}  \leq \alpha, \\
	&\frac{-2\pi n}{|(\omega,l)|} + \frac{2\pi\gamma}{|(\omega,l)|} \prod_{i=1}^{\infty} \frac{1}{1+|l_i|^{2+2\mu}\cdot i^{2+2\mu}} \geq \alpha.
	\end{aligned}\right.
	\end{equation*}

	The number $n$ should satisfy the following inequalities
	\begin{equation*}
	\begin{aligned}
	\frac{a|(\omega,l)|}{2\pi} \leq n \leq \frac{b|(\omega,l)|}{2\pi},\ \mbox{or}\ \  \frac{-b|(\omega,l)|}{2\pi} \leq n \leq \frac{-a|(\omega,l)|}{2\pi}
	\end{aligned}
	\end{equation*}
to make the intersection of $R_{\omega}^{l,n}$ with $[a+2\pi\gamma,b-2\pi\gamma]$ not empty.
	
Therefore we have
	\begin{equation*}
	\begin{aligned}
	m\left(\bigcup_{l,n}\R_{\omega}^{l,n}\right) \leq& \ 4\pi  \sum_{l\in Z^{\N^+}_{\star}} \sum_n \left( \frac{\gamma}  {|(\omega,l)|} \cdot \prod_{i=1}^{\infty}  \frac{1}{1+|l_i|^{2+2\mu}\cdot i^{2+2\mu}} \right )\\
	\leq& \ 4\pi  \sum_{l\in Z^{\N^+}_{\star}}\frac{\gamma} {|(\omega,l)|}\left(\frac{(b-a)|(\omega,l)|}{\pi}+2\right)\cdot  \prod_{i=1}^{\infty}  \frac{1}{1+|l_i|^{2+2\mu}\cdot i^{2+2\mu}} \\
	\leq& \ 4\pi \sum_{l\in Z^{\N^+}_{\star}} \frac{(b-a)\gamma}{\pi}\prod_{i=1}^{\infty}\frac{1}{1+|l_i|^{2+2\mu}\cdot i^{2+2\mu}}\\ &+\sum_{l\in Z^{\N^+}_{\star}} 8\pi\gamma\prod_{i=1}^{\infty}\frac{1+|l_i|^{1+\mu}\cdot i^{1+\mu}}{1+|l_i|^{2+2\mu}\cdot i^{2+2\mu}}\\
	\leq& \ 4\pi \frac{(b-a)\gamma}{\pi}\prod_{i=1}^{\infty} \left(1+\sum_{l_i=-\infty \atop l_i \neq 0} \frac{1}{1+|l_i|^{2+2\mu}\cdot i^{2+2\mu}}\right) \\
	& +8\pi\gamma \prod_{i=1}^{\infty} \left(1+\sum_{l_i=-\infty\atop l_i \neq 0} \frac{1}{1+|l_i|^{1+\mu}\cdot i^{1+\mu}}\right) \\
	=& \ O(\gamma)
	\end{aligned}
	\end{equation*}
	
	 When $\gamma$ tends to zero, the measure of the set of $\alpha$ which satisfies (\ref{Dio2}) tends to full measure, thus this kind of $\alpha$ exists.

\subsection{Proof of Lemma \ref{lemma of function space}}\label{4}

The property (i) is obvious, thus we strat with (ii).

Assume $f , u , v\in AP_{r,s}(\omega)$. Firstly note that $AP_{r,s}(\omega)$ is a Banach space for any positive $r,s$.
Assume $\epsilon =\|u\|_{r,s} + \|v\|_{r,s}$ which are less than $\min\{r-r',s-s'\}$, therefore $h(\xi,\eta) := f(x+u(\xi,\eta),y+v(\xi,\eta))$ are well defined in $D(r',s')$ and analytic in $\xi$ and $\eta$.

From Proposition \ref{modified module containment} and its corollary, it is known that for all $\eta$, $h(\cdot,\eta)$ is an almost periodic function and $\mathcal{M}(h) \in \mathcal{M}(f,g)$, moreover, $\mathcal{M}(h) \subseteq \mbox{span}\{\omega_i\}$. Thus formally we have $h=\sum_{l\in Z^{\N^+}} h_l(\eta) e^{i(\omega,l)\xi}$.  Unlike the sup-norm, the weighted norm of the function is very sensitive to coordinate transformations. Fortunately, it is enough to consider the canonical transformation that is close to identity.

Formally, $h$ also equals to

\begin{equation*}
h(\xi,\eta)=\sum_{l\in Z^{\N^+}} f_l(\eta+v) \cdot e^{i(\omega,l)\xi} \cdot e^{i(\omega,l)u(\xi,\eta)}.
\end{equation*}
From Proposition \ref{modified module containment} and its corollary again, we have $e^{i(\omega,l)u(\xi,\eta)} \in AP_{r,s}(\omega).$ From Lemma \ref{homogeneous} there is the estimate

\begin{equation*}
 \|e^{i(\omega,l)u(\xi,\eta)}\|_{r,s} \leq \sum_{k=0}^{\infty} \frac{\epsilon^k}{k!} = e^{\|l\|\epsilon}.
\end{equation*}

Meanwhile,
\begin{equation*}
\begin{aligned}
f_l(\eta + v(\xi,\eta)) &= \sum_{k=0}^{\infty} \sum_{j=0}^{k}\frac{f_{l,k}(\alpha)}{k!}  \begin{pmatrix}
k \\ j
\end{pmatrix}(\eta-\alpha)^{k-j} v(\xi,\eta)^{j} \\
&= \sum_{j=0}^{\infty} v(\xi,\eta)^{j} \cdot \sum_{k=j}^{\infty}\frac{f_{l,k}(\alpha)}{k!} \begin{pmatrix}
k \\ j
\end{pmatrix} (\eta-\alpha)^{k-j} .
\end{aligned}
\end{equation*}
By Cauchy's estimate \ref{Cauchy},
\begin{equation*}
|f_{l,k}(\alpha)| \leq \frac{|f|_{s}}{s^k}.
\end{equation*}
Thus

 \begin{equation*}
 \begin{aligned}
 \Biggl\|v(\xi,\eta)^{j} \sum_{k=j}^{\infty}\frac{f_{l,k}(\alpha)}{k!} \begin{pmatrix}
 k \\ j
 \end{pmatrix} (\eta-\alpha)^{k-j}\Biggr\|_{r',s'} \leq&  \|v(\xi,\eta)\|_{r',s'} ^{j}
  \cdot |f|_s \\ &\times \sum_{k=j}^{\infty} \frac{1}{k!} \begin{pmatrix}
  k \\ j
  \end{pmatrix} s^{-k} s'^{k-j},
 \end{aligned}
 \end{equation*}
 and
\begin{equation*}
\begin{aligned}
\|f_l\|_{r',s'} &\leq |f|_s \cdot \sum_{k=0}^{\infty} s^{-k} \sum_{j=0}^{k}\frac{1}{k!} \begin{pmatrix}
k \\ j
\end{pmatrix} s'^{k-j} \epsilon^{j}\\
&\leq |f|_s \cdot \sum_{k=0}^{\infty} \frac{s^{-k}}{k!} (s'+\epsilon)^k\\
&\leq e^{\frac{s'+\epsilon}{s}}|f|_s.
\end{aligned}
\end{equation*}

Then we have

\begin{equation*}
\|h\|_{r',s'} \leq \|f\|_{r,s} \cdot e^{\epsilon+\frac{s'+\epsilon}{s}}.
\end{equation*}

The property (iii) is proved by the well-known implicit function theorem in Banach space, it can be found in the book \cite{nonlinear} for example.

To solve the $u'(x,y)$ and $v'(x,y)$ from (\ref{inverse mapping}), it suffices to consider the equations

\begin{equation*}
\left\{
\begin{array}{lc}
0 = u'(x,y) + u(x+u'(x,y),y+v'(x,y)), &  \\[0.2cm]
0 = v'(x,y) + v(x+u'(x,y),y+v'(x,y)). &
\end{array}
\right.
\end{equation*}
Thus we define $G:AP_{r',s'}(\omega)^2 \times AP_{r,s}(\omega)^2 \rightarrow AP_{r',s'}^2$ by

\begin{equation*}
G(u',v',u,v):=
\begin{pmatrix}
u'(x,y) + u(x+u'(x,y),y+v'(x,y))  \\[0.2cm]
v'(x,y) + v(x+u'(x,y),y+v'(x,y))
\end{pmatrix}.
\end{equation*}
when $\|u'\|_{r',s'}+\|v'\|_{r',s'} < \min(r-r',s-s')$, $G$ is well defined.

To prove the existence of $u',v'$ satisfying $G=0$, for small enough $u,v$, by the implicit function theorem, it is sufficient to verify\\[0.2cm]
(a) $G(0,0,0,0)=0$;  ~\\[0.2cm]
(b) $\frac{\partial(G_1,G_2)}{\partial(u',v')}$ is invertible; ~\\[0.2cm]
(c) $G_{u'},G_{v'}$ is continuous in $(u,'v,'u,'v)$.

By the straight calculation, we have
\begin{equation*}
	\begin{pmatrix} G_{1,u'} & G_{1,v'} \\[0.2cm] G_{2,u'} & G_{2,v'}\end{pmatrix} \begin{pmatrix}
	h_1 \\[0.2cm] h_2
	\end{pmatrix}
	 = \begin{pmatrix} h_1+u_{x}(x+u',y+v')h_1 + u_{y}(x+u',y+v')h_2 \\[0.2cm] v_{x}(x+u',y+v')h_1 + h_2+v_{y}(x+u',y+v')h_2\end{pmatrix},
\end{equation*}
where $h_1$, $h_2\in AP_{r',s'}(\omega)$, and obviously when $(u',v',u,v)=(0,0,0,0)$, the operator above is the identity operator.

Therefore by Cauchy's estimate and the property (ii) which is just proved, when $\|u\|_{r,s}+\|v\|_{r,s}$ and $\|u'\|_{r',s'}+\|v'\|_{r',s'}$ small enough, the operator above is close to the identity operator $I$, which maps from $AP_{r',s'}$ to itself, in the sense of operator norm. The proof of the continuation in $(u',v',u,v)$ is similar. Thus the property (iii) is proved by the implicit function theorem in Banach space. In fact from the proof of the implicit function theorem we get that, if assume

\begin{equation*}
\begin{aligned}
	&\|u\|_{r,s}+\|v\|_{r,s} \leq \epsilon,
\end{aligned}
\end{equation*}
where $\epsilon$ is small enough to satisfy

\begin{equation*}
\max\left\{\frac{1}{r-r'},\frac{1}{s-s'}\right\} \cdot \epsilon \cdot \exp\left(2\epsilon+\frac{s'+2\epsilon}{s} \right) < 1/2,
\end{equation*}
then $u'$, $v'$ can be solved and satisfy $\|u'\|_{r,s}+\|v'\|_{r,s} \leq \epsilon$.

\subsection{Cauchy's estimate} \label{Cauchy}

\begin{lemma}
	Assume the analytic function $f$ defined in $B_s(z)$  satisfies $|f| \leq M$, then it satisfies
	\begin{equation*}
	|f'(w)| \leq \frac{M}{s-s'},
	\end{equation*}
	where $w \in B_{s'}(z)$ and $s'<s$.
\end{lemma}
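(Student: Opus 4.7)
The plan is to use the Cauchy integral formula for the derivative of an analytic function and optimize the radius of the integration contour.

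Since $w \in B_{s'}(z)$, we have $|w - z| < s'$. For any radius $\rho$ with $0 < \rho < s - s'$, the circle $C_\rho = \{\zeta : |\zeta - w| = \rho\}$ lies inside $B_s(z)$, because for $\zeta \in C_\rho$ the triangle inequality gives
\begin{equation*}
|\zeta - z| \leq |\zeta - w| + |w - z| < \rho + s' < s.
\end{equation*}
Thus $f$ is analytic on and inside $C_\rho$, and the Cauchy integral formula for the derivative applies:
\begin{equation*}
f'(w) = \frac{1}{2\pi i} \oint_{C_\rho} \frac{f(\zeta)}{(\zeta - w)^2}\, d\zeta.
\end{equation*}

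Next I would estimate the integral using the bound $|f| \leq M$ on $B_s(z)$ and the fact that $|\zeta - w| = \rho$ on $C_\rho$:
\begin{equation*}
|f'(w)| \leq \frac{1}{2\pi} \cdot 2\pi\rho \cdot \frac{M}{\rho^2} = \frac{M}{\rho}.
\end{equation*}
Finally, taking the supremum over $\rho$ (equivalently, letting $\rho \nearrow s - s'$) yields
\begin{equation*}
|f'(w)| \leq \frac{M}{s - s'},
\end{equation*}
which is the desired estimate.

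There is no real obstacle here; this is the textbook Cauchy derivative estimate. The only small care needed is choosing $\rho$ strictly less than $s-s'$ so that the integration contour lies strictly inside $B_s(z)$ (where $f$ is assumed analytic and bounded), and then passing to the limit $\rho \to s-s'$ to obtain the sharp constant. If the version used in the paper involves derivatives in one variable of a function defined on a polydisc or a strip (which occurs elsewhere in the text), the argument is unchanged: one fixes the other variables and applies the one-variable estimate.
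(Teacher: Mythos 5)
Your proof is correct and is exactly the standard Cauchy derivative estimate; the paper itself states this lemma without proof (treating it as a classical fact), so there is nothing to compare against, and your write-up supplies precisely the missing textbook argument. The one small point of care you flag — taking $\rho$ strictly less than $s-s'$ and then letting $\rho \nearrow s-s'$ — is handled correctly.
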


\begin{corollary}
	Assume $f(x,y) \in AP_{r,s}(\omega)$, then it satisfies
	\begin{equation*}
\|f_y\|_{r,s'} \leq \frac{\|f\|_{r,s}}{s-s'}.
	\end{equation*}
\end{corollary}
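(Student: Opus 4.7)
The plan is to reduce the weighted-norm estimate to a termwise application of the preceding Cauchy's lemma on each Fourier coefficient. Since $f \in AP_{r,s}(\omega)$ admits the expansion
\begin{equation*}
f(x,y) = \sum_{l \in Z^{\N^+}} f_l(y)\, e^{i(\omega,l)x},
\end{equation*}
with each $f_l$ analytic in $y$ on the disk $B_s(\alpha)$, the $y$-derivative formally satisfies
\begin{equation*}
f_y(x,y) = \sum_{l \in Z^{\N^+}} f_l'(y)\, e^{i(\omega,l)x}.
\end{equation*}
By the definition of the weighted norm $\|\cdot\|_{r,s'}$, it then suffices to control $|f_l'|_{s'}$ by $|f_l|_s$ uniformly in $l$.

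For each fixed $l$, the scalar analytic function $f_l$ is bounded by $|f_l|_s$ on $B_s(\alpha)$, so Cauchy's lemma (the preceding lemma in the appendix) yields $|f_l'(w)| \leq |f_l|_s/(s-s')$ for every $w \in B_{s'}(\alpha)$, and hence $|f_l'|_{s'} \leq |f_l|_s/(s-s')$. Summing against the weights $e^{r\|l\|}$ gives
\begin{equation*}
\|f_y\|_{r,s'} = \sum_{l \in Z^{\N^+}} |f_l'|_{s'}\, e^{r\|l\|} \leq \frac{1}{s-s'} \sum_{l \in Z^{\N^+}} |f_l|_s\, e^{r\|l\|} = \frac{\|f\|_{r,s}}{s-s'},
\end{equation*}
which is the claim. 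The termwise differentiation is legitimate because the series $\sum_l |f_l|_s e^{r\|l\|}$ converges, so the differentiated series converges absolutely in $D(r,s')$ by the same Cauchy bound.

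There is essentially no obstacle: the only small point to verify is that differentiation under the Fourier sum is permitted, but this is immediate from the absolute convergence furnished by finiteness of $\|f\|_{r,s}$ together with the termwise Cauchy bound. The rest is a single exchange of summation with a uniform scalar inequality.
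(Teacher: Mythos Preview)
Your proof is correct and matches the paper's intended argument: the paper states this result as an immediate corollary of the preceding scalar Cauchy lemma and gives no separate proof, so the termwise application of that lemma to each coefficient $f_l$ followed by summation against the weights $e^{r\|l\|}$ is exactly what is meant.
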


\begin{lemma}
	Assume $f(x,y) \in AP_{r,s}(\omega)$, then it satisfies
	\begin{equation*}
	\|f_x\|_{r',s} \leq \frac{\|f\|_{r,s}}{r-r'}.
	\end{equation*}
\end{lemma}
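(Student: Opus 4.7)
The plan is to reduce the estimate directly to the Fourier series expansion, since the weighted norm $\|\cdot\|_{r,s}$ is defined in terms of the Fourier coefficients. Since $f \in AP_{r,s}(\omega)$ admits the expansion
\begin{equation*}
f(x,y) = \sum_{l \in Z^{\N^+}} f_l(y)\, e^{i(\omega,l)x},
\end{equation*}
differentiating term by term (justified by the convergence of $\|f\|_{r,s}$) gives
\begin{equation*}
f_x(x,y) = \sum_{l \in Z^{\N^+}} i(\omega,l)\, f_l(y)\, e^{i(\omega,l)x}.
\end{equation*}
Applying the definition of the weighted norm then yields $\|f_x\|_{r',s} \leq \sum_l |(\omega,l)|\cdot |f_l|_s \cdot e^{r'\|l\|}$, so the estimate reduces to comparing the multiplier $|(\omega,l)|\, e^{r'\|l\|}$ with $\frac{e^{r\|l\|}}{r-r'}$ for each $l$.

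Next I would exploit the two trivial bounds available. First, from $|\omega_i|\le 1$ (imposed in the definition of the Diophantine frequency vector) and the definition $\|l\|=\sum_i |l_i|\cdot i$, we get
\begin{equation*}
|(\omega,l)| \;\leq\; \sum_i |l_i| \;\leq\; \sum_i i\,|l_i| \;=\; \|l\|.
\end{equation*}
Second, the elementary inequality $t\leq e^t$ for $t\geq 0$, applied with $t=(r-r')\|l\|$, gives $\|l\|\, e^{r'\|l\|} \leq \frac{e^{r\|l\|}}{r-r'}$. Combining the two bounds, the per-mode multiplier satisfies
\begin{equation*}
|(\omega,l)|\, e^{r'\|l\|} \;\leq\; \frac{e^{r\|l\|}}{r-r'},
\end{equation*}
and summing over $l$ with the coefficients $|f_l|_s$ yields $\|f_x\|_{r',s} \leq \frac{1}{r-r'}\sum_l |f_l|_s e^{r\|l\|} = \frac{\|f\|_{r,s}}{r-r'}$, as required.

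There is essentially no obstacle here; the proof is a direct computation on the Fourier side, parallel to how the companion corollary for $f_y$ follows from the one-variable Cauchy estimate in the $y$ variable. The only point worth flagging is that one should verify termwise differentiation is legitimate, but this is immediate because $\sum_l |(\omega,l)|\, |f_l|_s\, e^{r'\|l\|}$ is already shown to be finite by the bound above, so uniform convergence on $D(r',s)$ holds and the derivative of the sum equals the sum of the derivatives.
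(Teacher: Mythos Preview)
Your proof is correct and follows essentially the same approach as the paper: expand in Fourier series, differentiate termwise, bound $|(\omega,l)|\le \sum_i |l_i|\le \|l\|$, and then control the multiplier $\|l\|e^{-\delta\|l\|}$ by an elementary scalar inequality. The only cosmetic difference is that the paper uses the sharper bound $t e^{-\delta t}\le e^{-1}/\delta$ (optimized at $t=1/\delta$) before relaxing to $1/\delta$, whereas you use $t\le e^{t}$ directly; both yield the stated estimate.
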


\begin{proof}
Assume	$f$ takes the form $f(x,y) = \sum_{l\in Z^{\N^+}} f_l(y)e^{i(\omega,l)x}$, where $|\Im~ x|<r$, $y \in B_s(y_0)$, thus

	\begin{equation*}
		f_x = \sum_{l\in Z^{\N^+}} (\omega,l) \cdot f_l(y)e^{i(\omega,l)x}.
	\end{equation*}
	
Then we have
\begin{equation*}
\begin{aligned}
\|f_x\|_{r',s} &\leq  \sum_{l\in Z^{\N^+}} \bigg(\sum_{i=1}^{\infty}|l_i|\bigg) \cdot e^{-\delta\|l\|} \cdot  |f_l(y)| e^{r\|l\|}\\
&\leq \|f\|_{r,s} \cdot \sup_{l\in Z^{\N^+}} \left\{ \left(\sum_{i=1}^{\infty}|l_i| \right)e^{-\delta\|l\|} \right\}\\
& \leq \frac{e^{-1}}{\delta} \|f\|_{r,s}\\
& \leq \frac{\|f\|_{r,s}}{r-r'}.
\end{aligned}
\end{equation*}
\end{proof}

\section*{References}
\bibliographystyle{elsarticle-num}

\begin{thebibliography}{99}
{\small
\rm

\bibitem{VIA} V. I. Arnold, {\em Proof of a~theorem of A.\,N.~Kolmogorov on the invariance of quasi-periodic motions under small perturbations of the Hamiltonian,} Uspekhi Matematicheskikh Nauk 18 (1963) pp. 13-40.

\bibitem{bour} J. Bourgain, {\em On invariant tori of full dimension for 1D periodic NLS,} J. Funct. Anal 229 (2005) pp. 62-94.

\bibitem{cong1}F. Cong, X. Liang, C. Han, {\em The sufficient and necessary condition of Lagrange stability of quasi-periodic pendulum type equations,} Communications in Mathematical Research 26 (2010) pp. 76-84.

\bibitem{cong} H. Cong, J. Liu, Y. Shi,  X. Yuan, {\em The stability of full dimensional KAM tori for nonlinear Schrdinger equation,} J. Differential Equations 264 (2018) pp. 4504-4563.

\bibitem{nonlinear} K. Deimling, {\em Nonlinear Functional Analysis,} Springer-Verlag, Berlin (1985).

\bibitem{Herman1} M. R. Herman, {\em Surles courbes invariantes par les diff\'{e}omorphismes de l'anneau I,} Ast\'{e}risque No. 103-104 (1983).

\bibitem{Herman2} M. R. Herman, {\em Surles courbes invariantes par les diff\'{e}omorphismes de l'anneau II,} Ast\'{e}risque No. 144 (1986).

\bibitem{huangpeng2} P. Huang, X. Li, B. Liu, {\em Invariant curves of almost periodic twist mappings,} arXiv: Dynamical Systems (2016).

\bibitem{huangpeng3} P. Huang, X. Li, B. Liu, {\em Quasi-periodic solutions for an asymmetric oscillation,} Nonlinearity 29 (2016) 3006-3030.

\bibitem{huangpeng} P. Huang, X. Li, B. Liu, {\em Invariant curves of smooth quasi-periodic mappings,} Discrete Contin. Dyn. Syst. 38 (2017) pp. 131-154.

\bibitem{Kolmogorov} A. N. Kolmogorov, {\em On quasi-periodic motions under small perturbations of the Hamiltonian,} Dokl. Akad. Nauk USSR 98 (1954) pp. 527-530.

\bibitem{exact sym} M. Kunze, R. Ortega, {\em Twist mappings with non-periodic angles,} Lect. Notes Math 2065 (2013) pp. 265-300.

\bibitem{liubin} B. Liu, {\em Invariant curves of quasi-periodic reversible mapping,} Nonlinearity 18 (2005) pp. 685-701.

\bibitem{Moser}J. Moser,  {\em On invariant curves of area-preserving maps of an annulus,}  Nachr. Akad. Wiss. G\"{o}ttingen Math.-Phys. Kl. II (1962) 1-20.


\bibitem{ortega1} R. Ortega, {\em Boundedness in a piecewise linear oscillator and a variant of the small twist theorem,} Proc. London Math. Soc 79 (1999) pp. 381-413.

\bibitem{ortega2} R. Ortega, {\em Invariant curves of mappings with averaged small twist,} Adv. Nonlinear Studies 1 (2001) pp. 14-39.

\bibitem{spatial} J. P\"{o}schel, {\em Small divisors with spatial structure in infinite dimensional Hamiltonian systems,} Commun. Math. Phys. 127 (1990) pp. 351-393.

\bibitem{russmann}H. R\"{u}ssmann, {\em Kleine Nenner I: \"{U}ber invariante Kurven differenzierbarer Abbildungen eines Kreisringes,} Nachr.Akad.Wiss.G\"{o}ttingen Math.-Phys. Kl. II
(1970) pp. 67-105.

\bibitem{moser and siegel}C. Siegel, J. Moser, {\em Lectures on Celestial Mechanics,} (Berlin : Springer) (1997).

\bibitem{you} J. You, {\em Invariant tori and Lagrange stability of pendulum-type equations,} J. Differential Equations 85 (1990) pp. 54-65.

\bibitem{Zhar} V. Zharnitsky, {\em Invariant curve theorem for quasiperiodic twist mappings and stability of motion in the Fermi-Ulam problem,} Nonlinearity 13 (2000) pp. 1123-36.







}
\end{thebibliography}

\end{document}